\definecolor{ao(english)}{rgb}{0.0, 0.5, 0.0}
\definecolor{pink}{rgb}{1.0, 0.0, 1.0}
\theoremstyle{plain}
\newtheorem{theorem}{Theorem}
\numberwithin{theorem}{section}
\newtheorem*{corollary*}{Corollary}
\newtheorem*{Example*}{Example}
\newtheorem{lemma}[theorem]{Lemma}
\newtheorem{conjecture}[theorem]{Conjecture}
\theoremstyle{definition}
\newtheorem{definition}[theorem]{Definition}
\newtheorem*{def*}{Definition}
\newtheorem*{theorem*}{Theorem}
\newtheorem*{example}{Example}
\newtheorem*{definition*}{Definition}
\numberwithin{equation}{section}
\begin{document}

\title[New Arithmetic Properties for Overpartitions Where Nonoverlined Parts Are $\ell$-Regular]{New Arithmetic Properties for Overpartitions where Nonoverlined Parts are $\ell$-Regular}

\author[H. Nath]{Hemjyoti Nath}
\address[H. Nath]{Department of Mathematics, University of Florida, P.O. Box 118105, Gainesville, FL 32611-8105, USA}
\email{h.nath@ufl.edu}

\author[M. P. Saikia]{Manjil P. Saikia}
\address[M. P. Saikia]{Mathematical and Physical Sciences division, School of Arts and Sciences, Ahmedabad University, Ahmedabad 380009, Gujarat, India}
\email{manjil@saikia.in}

\author[J. A. Sellers]{James A. Sellers}
\address[J. A. Sellers]{Department of Mathematics and Statistics, University of Minnesota Duluth, Duluth, MN 55812, USA}
\email{jsellers@d.umn.edu}

\thanks{The second author is partially supported by a Start-Up Grant from Ahmedabad University, India (Reference No. URBSASI24A5).}

\keywords{Integer Partitions, overpartitions, Modular Forms, Hecke Eigenforms, Ramanujan-type congruences, $q$-Series.}

\subjclass[2020]{11P81, 11P82, 11P83, 05A17, 11F11.}

\begin{abstract}
In this paper, we study the partition functions $\overline{R_\ell^\ast}(n)$, which count the number of overpartitions of $n$ where the non-overlined parts are $\ell$-regular for a given $\ell$. Using elementary techniques, as well as the theory of modular forms, we establish several new arithmetic properties, including infinite families of congruences for these functions.
\end{abstract}

\maketitle

\section{Introduction}

An integer partition $\lambda$ of $n$ is a non-increasing sequence of positive integers \[\lambda =(\lambda_1, \lambda_2, \ldots, \lambda_k),\] such that $\lambda_i\geq \lambda_{i+1}$ for $1\leq i \leq k-1$ and $\sum\limits_{i=1}^k\lambda_i=n$. For instance, the $3$ partitions of $3$ are $(3)$, $(2,1)$ and $(1,1,1)$. We denote by $p(n)$ the number of partitions of $n$, and its generating function is given by
\[
\sum_{n\geq 0}p(n)q^n=\frac{1}{f_1},
\]
where we use the shorthand notation
\[
f_k:=\prod_{i\geq 1}(1-q^{ki}), \quad |q|<1.
\]
Due to their inherent simplicity and beauty, partitions have been studied extensively since the time of Euler, and various generalizations are known for them.

One such generalization is that of overpartitions (studied by Corteel and Lovejoy \cite{over}), which are partitions wherein the first occurrence of a part may be overlined. For instance, the overpartitions of $3$ are
\[
(3), (\overline{3}), (2,1), (\overline{2},1), (2,\overline{1}), (\overline{2}, \overline{1}), (1,1,1), \text{~and~}(\overline{1},1,1). 
\]
We denote the number of overpartitions of $n$ by $\overline{p}(n)$, and their generating function is given by
\[
\sum_{n\geq 0}\overline{p}(n)q^n=\frac{f_2}{f_1^2}.
\]
Numerous generalizations of overpartitions are known and studied. This paper is concerned with one such generalization.

Alanazi, Alenazi, Keith, and Munagi \cite{AlanaziAlenaziKeithMunagi} introduced the partition function $\overline{R_\ell^\ast}(n)$, which counts the number of overpartitions of $n$ where the non-overlined parts are $\ell$-regular (which means the non-overlined parts are not divisible by $\ell$). Alanazi, Alenazi, Keith, and Munagi explored several identities involving $\overline{R_\ell^\ast}(n)$ and other restricted partition functions and established various congruences modulo $3$ using elementary dissection techniques. For instance, they proved that for all $n \geq 0$ and $j \geq 3$, we have  
\[
\overline{R_3^\ast}(9n+4) \equiv  
\overline{R_3^\ast}(9n+7) \equiv 
\overline{R_{3^j}^\ast}(27n+19) \equiv 0 \pmod{3}.  
\]
The generating function for $\overline{R_\ell^\ast}(n)$ is given by \cite{AlanaziAlenaziKeithMunagi}  
\begin{equation}\label{eq:gf-rast}  
   \sum_{n\geq 0}\overline{R_\ell^\ast}(n)q^n = \frac{f_2 f_\ell}{f_1^2}.  
\end{equation} 

Subsequently, Sellers \cite{sell} established that, for all $n\geq 0$,
\[ 
\overline{R_3^\ast}(9n+4) \equiv 
\overline{R_3^\ast}(9n+7) \equiv 0 \pmod{4}.
\]
Additionally, he also proved two infinite families of congruences using induction: namely, for all $k\geq 1$ and all $n\geq 0$, we have
\begin{align*}
    \overline{R_3^\ast}\left( 9^k n + \frac{33 \cdot 9^{k-1}-1}{8} \right) & \equiv 0 \pmod{3},\\
    \overline{R_3^\ast}\left( 9^k n + \frac{57 \cdot 9^{k-1}-1}{8} \right) & \equiv 0 \pmod{3}.
\end{align*}
More recently, Alanazi, Munagi, and Saikia \cite{ala} established the following congruences for all $n \geq 0$:  
\begin{align}
   \overline{ R_3^\ast}(9n + 4) &\equiv 0 \pmod{12},\label{mun-1} \\
  \overline{   R_3^\ast}(9n + 7) &\equiv 0 \pmod{48}, \label{mun-2}\\
   \overline{  R_6^\ast}(9n + 5) &\equiv 0 \pmod{24}, \label{mun-3}\\
   \overline{  R_6^\ast}(9n + 8) &\equiv 0 \pmod{96}\label{mun-4}.
\end{align}
They mentioned that these congruences can be proved using the \texttt{RaduRK} Mathematica package. Additionally, they discovered the following two congruences but only mentioned that an automatic proof is available:
\begin{align}  
\overline{R_6^\ast}(27n+11) &\equiv 0 \pmod{64}, \label{mun-5}\\  
\overline{R_6^\ast}(81n+47) &\equiv 0 \pmod{24}.\label{mun-6}
\end{align}

In this paper, we are interested in finding new arithmetic properties, including infinite families of congruences for the $\overline{R^\ast_\ell}(n)$ function for different values of $\ell$. Since our proof techniques involve both elementary means as well as the theory of modular forms, we first state the results that we prove using elementary means, and then go to the results which we prove via the theory of modular forms.

From the work of Alanazi et al. \cite{AlanaziAlenaziKeithMunagi} and Sellers \cite{sell} we have an elementary proof of \eqref{mun-1}. Alanazi et al. \cite{ala} asked for elementary proofs of their congruences, and we give such proofs for \eqref{mun-2} -- \eqref{mun-4} in this paper. We also include \eqref{mun-1} as our method naturally gives a proof of that as well.

\begin{theorem}\label{elthm}
    For all $n\geq 0$,
    \begin{align}
   \overline{   R_3^\ast}(9n + 4) &\equiv 0 \pmod{12}, \label{amun-1}\\ 
   \overline{   R_3^\ast}(9n + 7) &\equiv 0 \pmod{48}, \label{amun-2}\\ 
   \overline{  R_6^\ast}(9n + 5) &\equiv 0 \pmod{24}, \label{amun-3}\\
   \overline{  R_6^\ast}(9n + 8) &\equiv 0 \pmod{96}\label{amun-4}.
    \end{align}
\end{theorem}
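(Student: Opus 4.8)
The plan is to give one uniform elementary argument for all four congruences, working directly from the product in \eqref{eq:gf-rast} for $\ell=3$ and $\ell=6$. The tools are the elementary congruences
\[
f_1^{\,3}\equiv f_3 \pmod 3,\qquad f_2^{\,3}\equiv f_6 \pmod 3,\qquad f_1^{\,2^{k}}\equiv f_2^{\,2^{k-1}}\pmod{2^{k}}\ \ (k\ge 1),
\]
all immediate from $(1-q^{i})^{p}\equiv 1-q^{pi}\pmod p$ and, for the last, induction on $k$. Since each $f_j$ and each $1/f_j$ is a power series over $\mathbb Z$, these congruences may be divided by and inverted freely. I will handle the factor $3$ and the power of $2$ in each modulus separately, then recombine by the Chinese Remainder Theorem.

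\emph{Factor of $3$.} Multiplying numerator and denominator by a suitable power of $f_1$,
\[
\frac{f_2 f_3}{f_1^{2}}=\frac{f_1 f_2 f_3}{f_1^{3}}\equiv f_1 f_2 \pmod 3,\qquad
\frac{f_2 f_6}{f_1^{2}}\equiv \frac{f_2^{4}}{f_1^{2}}=\frac{f_1 f_2^{4}}{f_1^{3}}\equiv \frac{f_1 f_2^{4}}{f_3}\pmod 3 .
\]
Hence each mod-$3$ assertion becomes a statement about the coefficients of a simple eta-product in a fixed class modulo $9$. Expanding $f_1=\sum_j(-1)^j q^{j(3j-1)/2}$, $f_2=\sum_k(-1)^k q^{k(3k-1)}$ and using the elementary facts $j(3j-1)/2\equiv j\pmod 3$ and $k(3k-1)\equiv -k\pmod 3$, one reads the $3$-dissections of $f_1$, $f_2$, and hence of $f_1 f_2$ and of $f_1 f_2^{4}/f_3$, straight off the pentagonal expansions; iterating the dissection once more to reach modulus $9$, the coefficient sitting on the progression $9n+4$ or $9n+7$ (for $\ell=3$), respectively $9n+5$ or $9n+8$ (for $\ell=6$), is either an empty sum or cancels in $\pm j$ pairs, hence vanishes modulo $3$.

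\emph{Power of $2$.} From $f_1^{\,2^{k}}\equiv f_2^{\,2^{k-1}}\pmod{2^{k}}$ we get $1/f_1^{2}\equiv f_1^{\,2^{k}-2}/f_2^{\,2^{k-1}}\pmod{2^{k}}$, so that
\[
\frac{f_2 f_3}{f_1^{2}}\equiv \varphi(-q)^{\,2^{k-1}-1}f_3 \pmod{2^{k}},\qquad
\frac{f_2 f_6}{f_1^{2}}\equiv \varphi(-q)^{\,2^{k-1}-1}f_6 \pmod{2^{k}},
\]
where $\varphi(-q)=f_1^{2}/f_2=1+2A$ with $A:=\sum_{j\ge1}(-1)^j q^{j^{2}}$. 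Since $(2A)^{k}\equiv 0\pmod{2^{k}}$, modulo $2^{k}$ the power $\varphi(-q)^{2^{k-1}-1}$ collapses to an explicit polynomial of degree $<k$ in $A$ with small coefficients — namely $1$ for $k=1$, $1+2A$ for $k=2$, $1+6A+4A^{2}$ for $k=3$, $1+14A+4A^{2}+8A^{3}$ for $k=4$, and $1+30A+4A^{2}+24A^{3}+16A^{4}$ for $k=5$. Multiplying by $f_3$ or $f_6$ (each supported on multiples of $3$) and extracting the coefficient of $q^{9n+a}$ with $a\in\{4,5,7,8\}$, several terms drop on residue grounds modulo $3$ (the constant term always, and for $\ell=6$ also the linear term in $A$), and what remains is a short signed sum indexed by the integer tuples $(j_1,\dots,j_r;m)$ with $1\le r<k$ (and $r\ge2$ for $\ell=6$) such that $j_1^{2}+\cdots+j_r^{2}$ plus an exponent of $f_3$ (resp.\ $f_6$) equals $9n+a$. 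A further $3$-dissection of this sum, combined with the symmetries $j_i\mapsto-j_i$ and $j_i\leftrightarrow j_{i'}$, exhibits its divisibility by $2^{2}$, $2^{3}$, $2^{4}$, $2^{5}$ in the four respective cases.

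The step I expect to be the real obstacle is the \emph{sharpness} of this last count. That the relevant coefficients are even is immediate, and a little care gives divisibility by $4$ (which already recovers \eqref{amun-1}); but to squeeze out four factors of $2$ in the $\overline{R_3^\ast}(9n+7)$ case, and five in the $\overline{R_6^\ast}(9n+8)$ case, one must push the dissection far enough that no available cancellation is wasted and must pin down exactly which shifted arguments are (twice) a generalized pentagonal number in each class modulo $9$; keeping the intermediate eta-quotients genuine integral power series throughout is what legitimizes the inversions of $f_1^{2^{k}}\equiv f_2^{2^{k-1}}$. Once the dissected forms are written out, the four divisibilities are read off by inspection, and combining them with the mod-$3$ results gives the moduli $12$, $48$, $24$, $96$; \eqref{amun-1} is simply the $\ell=3$, residue-$4$ instance and costs nothing extra.
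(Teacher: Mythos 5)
Your reduction of the $2$-power part to $\varphi(-q)^{2^{k-1}-1}f_\ell \pmod{2^k}$ and the binomial collapses $1+2A$, $1+6A+4A^2$, $1+14A+4A^2+8A^3$, $1+30A+4A^2+24A^3+16A^4$ are all correct, but the proposal stops exactly where the theorem lives: the divisibility of the residual signed sums by $2^2,2^3,2^4,2^5$ is asserted, not proved, and the symmetry you invoke cannot deliver it. Since $A=\sum_{j\geq 1}(-1)^jq^{j^2}$ runs over positive $j$ only, the map $j_i\mapsto -j_i$ has already been spent in writing $\varphi(-q)=1+2A$; and $j_i\leftrightarrow j_{i'}$ only pairs off-diagonal terms, leaving diagonal contributions whose $2$-adic valuation you never control. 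Concretely, for \eqref{amun-2} the linear term survives (as your parenthetical concedes, it drops only for $\ell=6$), so modulo $16$ you need $14\,[q^{9n+7}](Af_3)+4\,[q^{9n+7}](A^2f_3)+8\,[q^{9n+7}](A^3f_3)\equiv 0$, i.e.\ a relation modulo $8$ among signed counts of representations $9n+7=j_1^2+\cdots+j_r^2+3g$ ($g$ a generalized pentagonal number) for $r=1,2,3$; nothing in the sketch supplies it, and it is not something that can be ``read off by inspection.'' The situation for \eqref{amun-4} is one power of $2$ worse: you need $a_2+6a_3+4a_4\equiv 0\pmod 8$ with $a_i=[q^{9n+8}](A^if_6)$, and even the parity of the diagonal contribution to $a_2$ (terms $2j^2+6g=9n+8$) is not forced by your residue bookkeeping modulo $9$. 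The mod-$3$ half (``cancels in $\pm j$ pairs'') is likewise only asserted; it is the more plausibly repairable half, but as written it too is a claim, not an argument. You yourself flag the sharpness of the final count as the obstacle—that obstacle is precisely the content of the theorem.

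For comparison, the paper never splits the modulus and never needs any cancellation count: it applies the exact $3$-dissection \eqref{HirSel3diss} of $f_2/f_1^2$ to \eqref{eq:gf-rast}, extracts $q^{3n+1}$ (resp.\ $q^{3n+2}$) to get the exact identities $\sum_{n\geq 0}\overline{R_3^\ast}(3n+1)q^n=2f_2^3f_3^3/f_1^6$ and $\sum_{n\geq 0}\overline{R_6^\ast}(3n+2)q^n=4f_2^3f_6^3/f_1^6$, and then applies \eqref{HirSel3diss} once more; the constants $12$, $48$, $24$, $96$ then stand visibly in front of eta-quotients with integer coefficients. If you want to salvage your CRT framework, the missing $2$-adic relations would in effect have to be certified by such exact dissection identities anyway, at which point the splitting into prime powers buys nothing.
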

\noindent Theorem \ref{elthm} is proved in Section \ref{sec-elmthm}. The same proof technique also gives us the following family of congruences.
    \begin{theorem}\label{thm3n}
For all $n \geq 0$ and $k\geq 1$, we have
\begin{align*}
\overline{R_{3k}^\ast}(3n+1) &\equiv 0 \pmod{2}, \\
\overline{R_{3k}^\ast}(3n+2) &\equiv 0 \pmod{4}.
\end{align*}
\end{theorem}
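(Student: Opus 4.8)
The plan is to reduce the generating function \eqref{eq:gf-rast} with $\ell=3k$ modulo $2$ and modulo $4$, using only the standard facts $f_1^2\equiv f_2\pmod 2$ and $f_1^4\equiv f_2^2\pmod 4$, together with the observation that $f_{3k}=\prod_{i\ge 1}(1-q^{3ki})$ is a power series in $q^{3k}$, hence involves only exponents divisible by $3$.

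Modulo $2$ I would first write
\[
\sum_{n\ge 0}\overline{R_{3k}^\ast}(n)q^n=\frac{f_2 f_{3k}}{f_1^2}\equiv\frac{f_2 f_{3k}}{f_2}=f_{3k}\pmod 2 .
\]
Since every exponent occurring in $f_{3k}$ is a multiple of $3$, the coefficients of $q^{3n+1}$ and of $q^{3n+2}$ on the right vanish, which immediately gives $\overline{R_{3k}^\ast}(3n+1)\equiv\overline{R_{3k}^\ast}(3n+2)\equiv 0\pmod 2$.

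To improve the $3n+2$ case to a congruence modulo $4$, I would use $f_1^4\equiv f_2^2\pmod 4$ to get $1/f_1^2\equiv f_1^2/f_2^2\pmod 4$, so that
\[
\sum_{n\ge 0}\overline{R_{3k}^\ast}(n)q^n=\frac{f_2 f_{3k}}{f_1^2}\equiv\frac{f_1^2}{f_2}\,f_{3k}=\varphi(-q)\,f_{3k}\pmod 4 ,
\]
where $\varphi(-q)=\sum_{m\in\mathbb Z}(-1)^m q^{m^2}=f_1^2/f_2$ by Jacobi's triple product. Now every exponent in $\varphi(-q)$ is a perfect square, hence $\equiv 0$ or $1\pmod 3$, while every exponent in $f_{3k}$ is $\equiv 0\pmod 3$; so no exponent in the product $\varphi(-q)f_{3k}$ is $\equiv 2\pmod 3$, forcing the coefficient of $q^{3n+2}$ to be $\equiv 0\pmod 4$. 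This yields $\overline{R_{3k}^\ast}(3n+2)\equiv 0\pmod 4$ and, combined with the modulo $2$ reduction above (which supplies $\overline{R_{3k}^\ast}(3n+1)\equiv 0\pmod 2$), completes the argument.

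There is no real difficulty here; the one point to keep in mind is the elementary fact that a perfect square is never congruent to $2$ modulo $3$, which is exactly what makes the coefficient of $q^{3n+2}$ in $\varphi(-q)f_{3k}$ vanish modulo $4$.
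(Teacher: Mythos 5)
Your proof is correct, but it takes a different route from the paper. The paper's proof is a one-line application of the Hirschhorn--Sellers $3$-dissection \eqref{HirSel3diss} of $f_2/f_1^2$ to \eqref{eq:gf-rast} with $\ell=3k$: since $f_{3k}$ is a series in $q^3$, the $q^{3n+1}$ component carries the explicit factor $2$ and the $q^{3n+2}$ component the explicit factor $4$, and the congruences are read off directly. You instead bypass the dissection lemma entirely, reducing the eta-quotient modulo $2$ via $f_1^2\equiv f_2\pmod 2$ (giving $f_{3k}$, supported on exponents divisible by $3$) and modulo $4$ via $f_1^4\equiv f_2^2\pmod 4$, which turns the generating function into $\varphi(-q)f_{3k}\pmod 4$ with $\varphi(-q)=f_1^2/f_2$ as in \eqref{phi-1}; the $3n+2$ case then follows from the fact that a square is never $2\pmod 3$. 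Both arguments are sound; yours is arguably more self-contained, needing only the standard binomial-type congruences and the theta expansion of $\varphi(-q)$, while the paper's approach yields the exact $3$-dissected generating functions (not just congruences), which is what powers the deeper results such as Theorem \ref{elthm}, where the same dissection is iterated. One stylistic note: when you divide congruences of power series (e.g.\ passing from $f_1^4\equiv f_2^2\pmod 4$ to $1/f_1^2\equiv f_1^2/f_2^2\pmod 4$), it is worth remarking that the relevant quotients have integer coefficients because the series being inverted have constant term $1$, so the manipulation is legitimate; as written you assert it silently, though this is standard in the area and not a gap.
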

\noindent Theorem \ref{thm3n} is proved in Section \ref{sec-elmthm}.

We mention in passing the following general congruence which has not been observed before.
\begin{theorem}\label{thm1.000}
For all $n\geq 0$ and $\ell\geq 1$, we have
    \begin{equation*}
    \overline{R_{2\ell}^\ast}(2n+1)\equiv 0 \pmod{2}.
    \end{equation*}
\end{theorem}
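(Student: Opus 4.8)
The plan is to reduce the generating function \eqref{eq:gf-rast} modulo $2$ and observe that what remains is supported on even powers of $q$. Replacing $\ell$ by $2\ell$ in \eqref{eq:gf-rast} gives
\[
\sum_{n\geq 0}\overline{R_{2\ell}^\ast}(n)q^n=\frac{f_2 f_{2\ell}}{f_1^2}.
\]
First I would record the elementary fact that, for every $k\geq 1$,
\[
f_k^2=\prod_{i\geq 1}(1-q^{ki})^2\equiv\prod_{i\geq 1}(1-q^{2ki})=f_{2k}\pmod 2,
\]
which is just the identity $(1-x)^2\equiv 1-x^2\pmod 2$ applied factor by factor. In particular $f_1^2\equiv f_2\pmod 2$. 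Since every series in sight lies in $\mathbb{Z}[[q]]$ with constant term $1$, this congruence may be inverted inside $\mathbb{F}_2[[q]]$, so that $1/f_1^2\equiv 1/f_2\pmod 2$.

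Next I would substitute this into the generating function to obtain
\[
\sum_{n\geq 0}\overline{R_{2\ell}^\ast}(n)q^n\equiv\frac{f_2 f_{2\ell}}{f_2}=f_{2\ell}\pmod 2.
\]
Now $f_{2\ell}=\prod_{i\geq 1}(1-q^{2\ell i})$ is visibly a power series in $q^{2\ell}$, hence in $q^2$ since $\ell\geq 1$; consequently every coefficient of an odd power of $q$ in $f_{2\ell}$ vanishes. Extracting the coefficient of $q^{2n+1}$ on both sides then gives $\overline{R_{2\ell}^\ast}(2n+1)\equiv 0\pmod 2$ for all $n\geq 0$, which is exactly the claim.

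There is essentially no obstacle in this argument; the one point that deserves a word of justification is that a congruence modulo $2$ between power series over $\mathbb{Z}$ with constant term $1$ is preserved under taking reciprocals, which is immediate upon passing to $\mathbb{F}_2[[q]]$, where $f_1^2$ and $f_2$ are in fact literally equal. I expect the whole proof to occupy only a few lines.
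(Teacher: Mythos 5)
Your proof is correct, but it takes a different route from the one the paper indicates. The paper's proof of this statement is to substitute the exact $2$-dissection \eqref{1/f1^2} of $1/f_1^2$ into \eqref{eq:gf-rast}: since $f_2 f_{2\ell}$ is already a series in $q^2$, the odd-power part of $\frac{f_2 f_{2\ell}}{f_1^2}$ comes entirely from the term $2q\frac{f_4^2 f_{16}^2}{f_2^5 f_8}$, whose coefficients are manifestly even; this not only gives the congruence but also produces the exact generating function $\sum_{n\geq 0}\overline{R_{2\ell}^\ast}(2n+1)q^n = 2\frac{f_\ell f_2^2 f_8^2}{f_1^4 f_4}$, which (for $\ell=4$) is precisely \eqref{e 2n+1} and is reused in later arguments. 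You instead work modulo $2$ from the start, using $f_1^2\equiv f_2\pmod 2$ (correctly justified, including the inversion of the congruence in $\mathbb{F}_2[[q]]$) to reduce the generating function to $f_{2\ell}$, a series in $q^2$, so all odd coefficients are even. Your argument is, if anything, more elementary, and it is exactly the device the paper itself uses for the two theorems immediately following this one (where the generating function is reduced to $f_\ell$ modulo $2$); what it gives up relative to the paper's dissection is the exact identity for the odd-indexed subsequence, which carries more information than the parity statement alone.
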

\noindent Theorem \ref{thm1.000} follows very easily by applying \eqref{1/f1^2} in \eqref{eq:gf-rast}. We skip the proof of this result. We also have the following straightforward observation.
\begin{theorem}
For all $n \geq 0$ and $\ell \nmid r$, we have
   $$\overline{R_\ell^\ast}(\ell n + r) \equiv 0 \pmod{2}.$$ 
\end{theorem}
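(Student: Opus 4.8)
The plan is to reduce the generating function \eqref{eq:gf-rast} modulo $2$ and observe that it collapses to a series supported on multiples of $\ell$. First I would use that $(1-q^i)^2 \equiv 1-q^{2i} \pmod 2$ for every $i\geq 1$, so that $f_1^2 \equiv f_2 \pmod 2$, and hence $1/f_1^2 \equiv 1/f_2 \pmod 2$ as elements of $\mathbb{Z}[[q]]$ (this is precisely the kind of identity recorded as \eqref{1/f1^2}). Substituting into \eqref{eq:gf-rast} gives
\[
\sum_{n\geq 0}\overline{R_\ell^\ast}(n)q^n \;=\; \frac{f_2 f_\ell}{f_1^2} \;\equiv\; \frac{f_2 f_\ell}{f_2} \;=\; f_\ell \;=\; \prod_{i\geq 1}(1-q^{\ell i}) \pmod 2.
\]

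The right-hand side is a power series in $q^{\ell}$, so the coefficient of $q^m$ on the right vanishes modulo $2$ whenever $\ell \nmid m$. Writing $m = \ell n + r$ with $\ell \nmid r$ (so that $\ell \nmid m$) and comparing coefficients yields $\overline{R_\ell^\ast}(\ell n + r) \equiv 0 \pmod 2$ for all $n\geq 0$, which is the claim.

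The only point needing a word of care is the passage from $f_1^2 \equiv f_2 \pmod 2$ to $1/f_1^2 \equiv 1/f_2 \pmod 2$: both $1/f_1^2$ and $1/f_2$ are genuine elements of $\mathbb{Z}[[q]]$ with constant term $1$, and reduction modulo $2$ is a ring homomorphism $\mathbb{Z}[[q]] \to \mathbb{F}_2[[q]]$, so from $f_1^2$ and $f_2$ having the same image we get the same image for their inverses. I do not expect any genuine obstacle here — the argument is entirely formal, consistent with this being labelled a straightforward observation, and it simultaneously recovers Theorem \ref{thm1.000} as the special case $\ell = 2\ell'$, $r$ odd.
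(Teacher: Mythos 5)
Your argument is correct and is essentially the paper's own proof: the paper likewise reduces the generating function $f_2f_\ell/f_1^2$ modulo $2$ to $f_\ell$ (implicitly via $f_1^2\equiv f_2\pmod 2$) and then notes that $f_\ell$ is a series in $q^{\ell}$, so extracting the terms $q^{\ell n+r}$ with $\ell\nmid r$ gives the congruence. Your added justification that inversion commutes with reduction modulo $2$ is a fine (if routine) detail; note only that the full $2$-dissection \eqref{1/f1^2} is stronger than what is needed here.
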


\begin{proof}
Modulo \(2\), we have
\begin{equation*}
    \sum_{n \geq 0} \overline{R_\ell^\ast}(n)q^n \equiv f_{\ell} \pmod{2}.
\end{equation*}
By extracting the terms of the form \(q^{\ell n + r}\), where \(\ell \nmid n\), the result follows.
\end{proof}

Furthermore, the following simple observation naturally follows from Euler's pentagonal number theorem \cite[Eq. (1.3.18)]{Spirit}:
\begin{equation}\label{e2.0.3.4}
    f_1=\sum_{n \in \mathbb{Z}}(-1)^nq^{n(3n-1)/2}.
\end{equation}

\begin{theorem}\label{thm1}
   Let $\ell \geq 2$.  Then, for all $n\geq 0$, $\overline{R_{\ell}^\ast}(n)$ is odd if and only if $\frac{24n}{\ell}+1$ is a square.
\end{theorem}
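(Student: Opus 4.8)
The plan is to reduce the problem to a parity statement about a single infinite product and then invoke Euler's pentagonal number theorem \eqref{e2.0.3.4}. Working modulo $2$ and using $(1-q^i)^2\equiv 1-q^{2i}\pmod 2$, one gets $f_1^2\equiv f_2\pmod 2$, so that the factor $f_2$ in the numerator of \eqref{eq:gf-rast} cancels the denominator and
\[
\sum_{n\ge 0}\overline{R_\ell^\ast}(n)\,q^n\;\equiv\;f_\ell\pmod 2 .
\]
Thus $\overline{R_\ell^\ast}(n)$ is odd if and only if the coefficient of $q^n$ in $f_\ell=\prod_{i\ge 1}(1-q^{\ell i})$ is odd, and the whole question is now about this one product.

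Next I would expand $f_\ell$ using \eqref{e2.0.3.4} with $q$ replaced by $q^{\ell}$, obtaining
\[
f_\ell=\sum_{j\in\mathbb Z}(-1)^{j}q^{\ell j(3j-1)/2}.
\]
Writing $g_j:=j(3j-1)/2$ for the generalized pentagonal numbers, the key observation is that the $g_j$, hence also the exponents $\ell g_j$, are pairwise distinct as $j$ runs over $\mathbb Z$; so no two monomials in this series collide. Therefore the coefficient of $q^n$ in $f_\ell$ is $(-1)^{j}$ when $n=\ell g_j$ for the (unique) such $j$ and is $0$ otherwise, and since $(-1)^{j}$ is odd, $\overline{R_\ell^\ast}(n)$ is odd precisely when $n=\ell g_j$ for some $j\in\mathbb Z$.

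It then remains to translate the condition ``$n=\ell g_j$ for some $j\in\mathbb Z$'' into ``$\tfrac{24n}{\ell}+1$ is a perfect square'', which is essentially the classical equivalence that an integer $m$ is a generalized pentagonal number iff $24m+1$ is a square, used with $m=n/\ell$. One direction is a one-line computation: $n=\ell g_j$ gives $\tfrac{24n}{\ell}=12j(3j-1)=(6j-1)^2-1$, so $\tfrac{24n}{\ell}+1=(6j-1)^2$. For the converse one starts from $\tfrac{24n}{\ell}+1=k^2$ with $k\ge 0$, deduces $24\mid k^2-1$, concludes from $k^2\equiv 1\pmod 8$ and $k^2\equiv 1\pmod 3$ that $\gcd(k,6)=1$, writes $k=6j-1$ or $k=6j+1$, and reads off $n/\ell=g_j$ or $g_{-j}$, hence $n=\ell g_j$ or $\ell g_{-j}$.

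The step I expect to cost the most care is that converse, and within it the assertion $24\mid k^2-1$ — equivalently, that the integer $\tfrac{24n}{\ell}$ produced by the hypothesis is actually a multiple of $24$, which is exactly what forces $k$ to be coprime to $6$. Once this is secured the rest of the converse, and indeed everything surrounding the pentagonal number theorem, is routine bookkeeping; I would establish it by comparing the $2$-adic and $3$-adic valuations of $\tfrac{24n}{\ell}$ with those of $k^2-1$, using that an even $k$ makes $\tfrac{24n}{\ell}$ odd while a $k$ divisible by $3$ makes $\tfrac{24n}{\ell}\equiv 2\pmod 3$.
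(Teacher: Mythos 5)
Your reduction modulo $2$ to $f_\ell$ and your use of Euler's theorem \eqref{e2.0.3.4} with $q\mapsto q^\ell$ (noting the exponents $\ell\, j(3j-1)/2$ are pairwise distinct) is exactly the paper's argument, and your forward direction --- $n=\ell\, j(3j-1)/2$ gives $\tfrac{24n}{\ell}+1=(6j-1)^2$ --- is precisely the computation the paper makes. The genuine gap is in your converse. From $\tfrac{24n}{\ell}+1=k^2$ you cannot deduce $24\mid k^2-1$: that assertion is equivalent to $\ell\mid n$, and nothing in the hypothesis forces it. Your proposed rescue via $2$-adic and $3$-adic valuations produces no contradiction, because when $\gcd(\ell,6)>1$ the integer $\tfrac{24n}{\ell}$ can perfectly well be odd, or $\equiv 2\pmod 3$. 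Concretely, take $\ell=3$, $n=1$: then $\tfrac{24n}{\ell}+1=9$ is a square (with $k=3$ divisible by $3$), yet $\overline{R_3^\ast}(1)=2$ is even, since the coefficient of $q$ in $f_3$ vanishes; similarly $\ell=8$, $n=1$ gives $\tfrac{24n}{\ell}+1=4$ with $k=2$ even and $\overline{R_8^\ast}(1)=2$. So the step ``$24\mid k^2-1$'' fails, and in fact the ``if'' direction of the statement, read literally, is false for such $\ell$ and $n$.

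What your difficulty actually uncovers is an imprecision that the paper's own proof glides over: the paper establishes ``$\overline{R_\ell^\ast}(n)$ is odd iff $n=\ell\cdot j(3j-1)/2$ for some $j$,'' performs only the forward computation $\tfrac{24n}{\ell}+1=(6j-1)^2$, and closes with ``the result follows.'' The correct translation of the pentagonal condition is ``$\ell\mid n$ and $24(n/\ell)+1$ is a square,'' which agrees with the stated condition when $\gcd(\ell,24)=1$ but is strictly stronger in general. With that amendment (add $\ell\mid n$, or assume $\gcd(\ell,6)=1$), your converse --- $k^2\equiv 1\pmod{24}$, hence $k\equiv\pm1\pmod 6$, hence $n/\ell=g_{\pm j}$ --- goes through as routine bookkeeping, and the rest of your write-up matches the paper's proof.
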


\begin{proof}
Modulo \(2\), we have
    \begin{equation*}
    \sum_{n \geq 0} \overline{R_\ell^\ast}(n)q^n \equiv f_{\ell} \pmod{2}.
\end{equation*}
This means that $\overline{R_\ell^\ast}(n)$ is odd if and only if $n = \ell\cdot k(3k-1)/2$ for some $k$ thanks to \eqref{e2.0.3.4}. Then $24n+\ell = \ell(6k-1)^2$ or $\frac{24n}{\ell}+1 = (6k-1)^2$.  The result follows.
\end{proof}

We next prove several Ramanujan--like congruences modulo small powers of 2 satisfied by $\overline{R^\ast_8}$.

\begin{theorem}\label{thm:nathsel}
For all $n\geq 0$, we have
\begin{align}
\overline{R^\ast_8}(16n+9)&\equiv 0 \pmod{8},\label{16n9}\\
\overline{R^\ast_8}(16n+11)&\equiv 0 \pmod{16},\label{16n11}\\
\overline{R^\ast_8}(32n+25)&\equiv 0 \pmod{16},\label{32n25}\\
\overline{R^\ast_8}(64n+37)&\equiv 0 \pmod{32},\label{64n37}\\
\overline{R^\ast_8}(16n+13)&\equiv 0 \pmod{64},\label{16n13}\\
\overline{R^\ast_8}(32n+21)&\equiv 0 \pmod{64},\label{32n21}\\
\overline{R^\ast_8}(16n+15)&\equiv 0 \pmod{128},\label{16n15}\\
\overline{R^\ast_8}(32n+29)&\equiv 0 \pmod{256},\label{32n29}\\
\overline{R^\ast_8}(128n+85)&\equiv 0 \pmod{512},\label{128n85}\\
\overline{R^\ast_8}(64n+53)&\equiv 0 \pmod{1024},\label{64n53}\\
\overline{R^\ast_8}(128n+117)&\equiv 0 \pmod{4096}.\label{128n117}
\end{align}
\end{theorem}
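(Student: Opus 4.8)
The plan is to extract all eleven congruences from the generating function \eqref{eq:gf-rast} with $\ell=8$, namely $\sum_{n\ge0}\overline{R^\ast_8}(n)q^n=\dfrac{f_2f_8}{f_1^2}$, by repeated $2$-dissection while carefully tracking powers of $2$. The one external input is the classical $2$-dissection $\dfrac{1}{f_1^2}=\dfrac{f_8^5}{f_2^5f_{16}^2}+2q\dfrac{f_4^2f_{16}^2}{f_2^5f_8}$ (this is the identity \eqref{1/f1^2} already invoked for Theorem~\ref{thm1.000}); squaring it repeatedly produces $2$-dissections of $1/f_1^{2^k}$ for every $k$, which drive the whole argument. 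Substituting it into $\dfrac{f_2f_8}{f_1^2}$ and separating even and odd parts yields
\[
\sum_{n\ge0}\overline{R^\ast_8}(2n)q^n=\frac{f_4^6}{f_1^4f_8^2},\qquad \sum_{n\ge0}\overline{R^\ast_8}(2n+1)q^n=2\,\frac{f_2^2f_8^2}{f_1^4}.
\]
Every progression named in Theorem~\ref{thm:nathsel} consists of odd residues, so everything reduces to understanding the coefficients of $v(q):=\dfrac{f_2^2f_8^2}{f_1^4}$ along suitable dyadic subprogressions modulo the relevant power of $2$.

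Next I would iterate. At a typical stage one has an identity $\sum_{n\ge0}\overline{R^\ast_8}(2^{a}n+r)q^n=2^{\,b}\,\Phi(q)$, where $\Phi$ is an explicit $\mathbb{Z}$-linear combination of eta-quotients in $f_1,f_2,f_4,f_8,f_{16},\dots$; performing one more $2$-dissection and taking the even or odd part passes to $\sum_{n\ge0}\overline{R^\ast_8}(2^{a+1}n+r')q^n$, and taking the \emph{odd} part forces a strictly larger power of $2$ in front, because the binomial coefficients $\binom{2^{k}}{i}2^{i}$ appearing when $1/f_1^{2^{k}}$ is squared all have $2$-adic valuation $\ge k+1$ for $i\ge 1$. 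This is exactly the mechanism behind the growth of the moduli and behind the nested families $\{9,25\}$, $\{13,21,29\}$, $\{37,53\}$, $\{85,117\}$ in the statement. To read off $\overline{R^\ast_8}(2^{c}n+r')\equiv 0\pmod{2^{d}}$ from $\sum_n\overline{R^\ast_8}(2^{c}n+r')q^n=2^{b}\Phi$, one reduces $\Phi$ modulo $2^{\,d-b}$ and uses the elementary congruences $f_m^{2}=f_{2m}(1+2h_m)$, hence $f_1^{2^{j}}\equiv f_2^{2^{j-1}}\pmod{2^{j}}$ and $1/f_1^{2^{j}}\equiv 1/f_2^{2^{j-1}}\pmod{2^{j}}$: after this substitution the surviving terms of $\Phi$ collapse to a function of $q^2$, so its odd-exponent coefficients vanish modulo $2^{\,d-b}$, which is the claim. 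For example, iterating the odd/even split three times produces $\sum_{n\ge0}\overline{R^\ast_8}(8n+1)q^n=2\Phi(q)$ with $\Phi\equiv\dfrac{f_2^{12}f_4^{16}}{f_1^{20}f_8^8}\pmod4$, and $\dfrac{f_2^{12}f_4^{16}}{f_1^{20}f_8^8}\equiv\dfrac{f_2^{2}f_4^{16}}{f_8^{8}}\pmod4$ is a function of $q^2$; reading off the odd part of $\Phi$ gives \eqref{16n9}. Likewise \eqref{16n11} follows from the even part of $\dfrac{f_2^2f_4^6}{f_1^8}$ reducing to $f_2^5\pmod 2$, and \eqref{16n15} from its odd part reducing to $f_2^{11}\pmod 2$.

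The main obstacle is bookkeeping depth and closure of the eta-quotient family, not any single conceptual point: congruences \eqref{16n9}--\eqref{16n11} live at dyadic level $4$, \eqref{32n25}--\eqref{32n29} at level $5$, \eqref{64n37},\eqref{64n53} at level $6$, and \eqref{128n85},\eqref{128n117} at level $7$; worse, when the first reduction of $\Phi$ does not collapse to a function of $q^2$ but leaves a residual term $2^{j}q\cdot(\text{series})$, one must dissect once more and show the series contributes another factor of $2$ on the relevant subprogression, so the number of levels actually needed for \eqref{128n117} (modulus $2^{12}$) is roughly twice its dyadic level. To keep this manageable one should first record a short list of $2$-dissection identities for the handful of quotients that recur — for instance $\dfrac{f_2^2f_4^6}{f_1^8}$, $\dfrac{f_2^4f_8^4}{f_1^8}$, $\dfrac{f_4^{12}}{f_1^8f_8^4}$ and their immediate descendants — so that the collection of $\Phi$'s is closed under the dissection operator, and then run the recursion level by level, each of the eleven congruences emerging at the stage where the residual power of $2$ first suffices; the two deepest cases, \eqref{64n53} and \eqref{128n117}, are cleanest to organise as an induction on the level rather than tracking the expansion term by term. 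Finally, each congruence can independently be certified by the Radu--Kolberg algorithm via a finite Sturm-bound check, the route indicated in \cite{ala}, which provides an alternative (modular forms) proof and a useful consistency test.
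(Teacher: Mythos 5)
Your proposal is correct and follows essentially the same route as the paper: starting from $\frac{f_2f_8}{f_1^2}$, the paper also derives the identities for $\overline{R^\ast_8}(2n)$ and $\overline{R^\ast_8}(2n+1)$ and then iterates the $2$-dissections \eqref{1/f1^2} and \eqref{1/f1^4}, reading off the stated powers of $2$ from the explicit eta-quotient expansions (e.g.\ \eqref{e 8n+1}--\eqref{e 32n+29}). Your only deviation is cosmetic: where the paper carries the exact dissections one level further, you finish by reducing the eta-quotients modulo a small power of $2$ (via $f_1^{2^j}\equiv f_2^{2^{j-1}}\pmod{2^j}$) so that they collapse to functions of $q^2$, which is a valid shortcut and, as your sample computations for \eqref{16n9}, \eqref{16n11}, \eqref{16n15} show, yields the same congruences.
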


\noindent Theorem \ref{thm:nathsel} is proved in Section \ref{sec-elmthm}.

We next prove the following infinite family of congruences.
\begin{theorem}\label{saik}
     For all $n\geq 0$ and $k\geq 3$, we have
    \begin{align*}
        \overline{R^\ast_{2^k}}(8n+1)&\equiv 0\pmod2,\\
    \overline{R^\ast_{2^k}}(8n+2)&\equiv 0\pmod2,\\
    \overline{R^\ast_{2^k}}(8n+3)&\equiv 0\pmod4,\\
    \overline{R^\ast_{2^k}}(8n+4)&\equiv 0\pmod2,\\
    \overline{R^\ast_{2^k}}(8n+5)&\equiv 0\pmod{8},\\
    \overline{R^\ast_{2^k}}(8n+6)&\equiv 0\pmod4,\\
    \overline{R^\ast_{2^k}}(8n+7)&\equiv 0\pmod{16}.
    \end{align*}
\end{theorem}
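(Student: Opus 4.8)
The plan is to deduce Theorem~\ref{saik} from $2$-adic properties of the plain overpartition function $\overline p(n)$ on arithmetic progressions modulo $8$. By \eqref{eq:gf-rast},
\[
\sum_{n\ge 0}\overline{R^\ast_{2^k}}(n)q^{n}=f_{2^k}\sum_{m\ge 0}\overline p(m)q^{m}.
\]
For $k\ge 3$ we have $8\mid 2^k$, so $f_{2^k}$ is a power series in $q^{8}$; under the substitution $q^{8}\mapsto q$ it becomes precisely $f_{2^{k-3}}$. Hence, extracting the progression $8n+r$ for a fixed $r\in\{1,\dots,7\}$ gives the exact identity
\[
\sum_{n\ge 0}\overline{R^\ast_{2^k}}(8n+r)\,q^{n}=f_{2^{k-3}}(q)\cdot\sum_{m\ge 0}\overline p(8m+r)\,q^{m}.
\]
Since $f_{2^{k-3}}(q)\in\mathbb Z[[q]]$, multiplying by it can only raise the $2$-adic valuations of coefficients. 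So it suffices to prove, \emph{uniformly in $k$}, that $\overline p(8m+r)\equiv 0\pmod{2^{a_r}}$ for all $m\ge 0$, where $(a_1,\dots,a_7)=(1,1,2,1,3,2,4)$.

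Next I would establish those $\overline p$-congruences by the standard iterated $2$-dissection of $\sum_m\overline p(m)q^m=f_2/f_1^2=1/\varphi(-q)$, where $\varphi(q):=\sum_{n\in\mathbb Z}q^{n^2}$ and $\psi(q):=\sum_{n\ge 0}q^{n(n+1)/2}$. Starting from the classical dissection $\varphi(-q)=\varphi(q^4)-2q\psi(q^8)$, rationalising the denominator, and applying the duplication formula $\varphi(q)^2=\varphi(q^2)^2+4q\psi(q^4)^2$ produces an explicit $2$-dissection of $1/\varphi(-q)$ whose two halves have denominators that are series in $q^2$. Carrying this out once, and then once more on the odd half, while tracking the powers of $2$ via the elementary facts $\varphi(q)\equiv 1\pmod 2$, $\varphi(q)^2\equiv 1\pmod 4$, and $\psi(q)^2\equiv\psi(q^2)\pmod 2$, I expect to obtain
\[
\sum_{m\ge 0}\overline p(2m)\,q^{m}\equiv\varphi(q^2)\pmod 4,\qquad
\sum_{m\ge 0}\overline p(2m+1)\,q^{m}\equiv 2\psi(q^4)\pmod 8,
\]
and, after the second step,
\[
\sum_{m\ge 0}\overline p(4m+3)\,q^{m}\equiv 8\,\psi(q^2)\psi(q^4)\pmod{16};
\]
the mod~$2$ statement $\overline p(m)\equiv 0$ for $m\ge 1$ is immediate from $f_1^2\equiv f_2\pmod 2$.

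The congruences in Theorem~\ref{saik} then fall out by inspecting the supports of these theta products. The series $\varphi(q^2)$, $\psi(q^4)$ and $\psi(q^2)\psi(q^4)$ are supported on exponents of the shapes $2j^2$, $2j(j+1)$ and $a(a+1)+2b(b+1)$; in the first two the nonconstant exponents are $\equiv 0\pmod 4$, and every exponent of the third is even. Writing $8m+r$ in the appropriate form one checks: $r\in\{1,2,4\}$ is handled by ``$\overline p(m)\equiv 0\pmod 2$''; $8m+6=2(4m+3)$ sits at the odd index $4m+3$, off the support $\{2j^2\}$, so $\overline p(8m+6)\equiv 0\pmod 4$; $8m+3$ and $8m+5$ sit, after writing them as $2\ell+1$, at indices $\ell\equiv 1,2\pmod 4$ respectively, off the support $\{2j(j+1)\}$, so $\overline p(8m+3),\overline p(8m+5)\equiv 0\pmod 8$; and $8m+7=4(2m+1)+3$ sits at the odd index $2m+1$, off the all-even support of $\psi(q^2)\psi(q^4)$, so $\overline p(8m+7)\equiv 0\pmod{16}$. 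Feeding these back through the reduction of the first paragraph finishes the argument.

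The step I expect to be the main obstacle is the mod~$16$ identity for $\overline p(4m+3)$: it requires pushing the dissection one level deeper than the rest, and one must handle with care the cross terms produced when the $4q\psi(\cdot)^2$ pieces get squared, checking that every contribution other than $8\psi(q^2)\psi(q^4)$ is $\equiv 0\pmod{16}$. Everything else — the reduction via $f_{2^{k-3}}$ and the lower-modulus dissections — is routine; alternatively, the required congruences for $\overline p(n)$ on progressions modulo $8$ may simply be quoted from the existing literature on the overpartition function.
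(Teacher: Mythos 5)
Your proposal is correct, and it takes a route that differs from the paper in a meaningful way. Both arguments begin with the same key observation: for $k\geq 3$ the factor $f_{2^k}$ in \eqref{eq:gf-rast} (and, in the paper, the whole tail $\prod_{i\geq 3}\varphi(q^{2^i})^{2^i}$ coming from \eqref{phi-2}) is a series in $q^8$, hence inert for the $8$-dissection. After that the paper reduces the statement to congruences for the coefficients of the $8$-dissection of $\varphi(q)\varphi(q^2)^2\varphi(q^4)^4$ modulo $16$ and simply cites Saikia--Sarma \cite[Theorem 1.9]{SaikiaSarma} for those, whereas you keep the factor $f_2/f_1^2$ intact, observe the exact identity $\sum_n\overline{R^\ast_{2^k}}(8n+r)q^n=f_{2^{k-3}}(q)\sum_m\overline p(8m+r)q^m$, and reduce everything to $2$-adic congruences for the overpartition function $\overline p$ on residue classes modulo $8$, which you then prove by iterated $2$-dissection. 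Your intermediate identities check out: using \eqref{1/f1^2} one gets $\sum\overline p(2m)q^m=f_4^5/(f_1^4f_8^2)\equiv\varphi(q^2)\pmod 4$ and $\sum\overline p(2m+1)q^m=2f_2^2f_8^2/(f_1^4f_4)\equiv 2\psi(q^4)\pmod 8$, and a further application of \eqref{1/f1^4} gives $\sum\overline p(4m+3)q^m=8f_2f_4^6/f_1^8\equiv 8\psi(q^2)\psi(q^4)\pmod{16}$, from which the support arguments deliver exactly the exponents $(1,1,2,1,3,2,4)$ of $2$ that the theorem requires. (One small slip: the exponents $2j^2$ of $\varphi(q^2)$ are not all $\equiv 0\pmod 4$; but you only use their evenness for the $8n+6$ case, so nothing is affected.) The trade-off is that your argument is self-contained and elementary, with all dissection formulas already available in the paper as \eqref{1/f1^2} and \eqref{1/f1^4}, while the paper's proof is shorter because it outsources the crucial coefficient congruences to the cited result; your remark that the needed $\overline p$-congruences could alternatively be quoted from the overpartition literature would make your route comparably short.
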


\noindent Theorem \ref{saik} is proved in Section \ref{sec-elm2}. It can be noted that the $8n+2i$ cases, $1\leq i\leq 3$,  follow from Theorem \ref{thm1.000}. However, we can give a uniform proof of all the above congruences, so we include these cases as well in the statement.

We now state three results which follow from applications of classical results involving generating function manipulations.

\begin{theorem}\label{thmjames1}
    For all $n\geq 0$, we have
    \[\overline{R_6^\ast}(6n+5) = \overline{R_6^\ast}(3(2n+1)+2) \equiv 0 \pmod{8}.\]
\end{theorem}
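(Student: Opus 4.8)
The plan is to start from the generating function \eqref{eq:gf-rast} with $\ell = 6$, namely $\sum_{n\geq 0}\overline{R_6^\ast}(n)q^n = \dfrac{f_2 f_6}{f_1^2}$, and perform a $3$-dissection. First I would write $\dfrac{f_2 f_6}{f_1^2} = f_2 f_6 \cdot \dfrac{1}{f_1^2}$ and invoke a standard $3$-dissection of $1/f_1^2$ (the generating function for overpartitions up to the $f_2$ factor); such dissections are classical and appear, for instance, in the works already cited in the excerpt. Collecting the terms in arithmetic progression $3m+2$ inside the bracket, and noting that $f_2$ and $f_6$ only contribute powers of $q$ that are even and multiples of $6$ respectively, I would isolate $\sum_{n\geq 0}\overline{R_6^\ast}(3n+2)q^n$ as an explicit eta-quotient in $q$. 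The expectation, based on the shape of \eqref{mun-3} and \eqref{mun-4}, is that this extracted series will again split cleanly, and a further $2$-dissection (or a direct congruence argument modulo $8$ on the resulting eta-quotient) will pin down the subprogression $6n+5$, i.e. $3(2n+1)+2$.

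The key steps, in order, are: (1) record the relevant $3$-dissection of $1/f_1^2$ in terms of $f_3, f_6, f_9, f_{18}$ (or equivalently in terms of the standard cubic theta components $a(q)$, or $\varphi$, $\psi$ functions); (2) multiply through by $f_2 f_6$ and extract the arithmetic progression $3n+2$, obtaining $\sum \overline{R_6^\ast}(3n+2)q^n$ as a product of eta-quotients in $q$; (3) reduce that eta-quotient modulo $8$ using the elementary congruences $f_1^2 \equiv f_2 \pmod 2$, $f_1^4 \equiv f_2^2 \pmod 4$, $f_2^2 \equiv f_4 \cdot(\text{something}) $, and more precisely the standard $f_k^{2^a}$-type lifts modulo $2^a$, to bring the series to a form where the even-index terms visibly vanish to the required $2$-power; (4) extract the terms $q^{2n+1}$ and read off $\overline{R_6^\ast}(6n+5)\equiv 0 \pmod 8$. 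A cleaner alternative for step (3)--(4): show directly that, modulo $8$, the extracted series for $\overline{R_6^\ast}(3n+2)$ is supported on even exponents, or that its odd-exponent part is $\equiv 0 \pmod 8$, by tracking a single binomial-type factor such as $(1-q)^{-2}$ or $f_1^{-2}$ modulo $8$ and using that $\binom{-2}{m}$ has controlled $2$-adic valuation.

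I expect the main obstacle to be step (2): managing the bookkeeping of which residue classes mod $3$ survive after multiplying the (already somewhat intricate) $3$-dissection of $1/f_1^2$ by $f_2 f_6$, since $f_2$ mixes the residues in a nontrivial way and one must carefully collect only the $q^{3n+2}$ terms. A secondary difficulty is making the modulo-$8$ reduction in step (3) tight enough: getting $\pmod 2$ or $\pmod 4$ is routine, but squeezing out the full $\pmod 8$ will likely require using an exact $2$-dissection identity (for example for $\psi(q)$ or for $f_1 f_2$) rather than a crude congruence, so that the odd part of the series carries a visible factor of $8$. Once the correct intermediate eta-quotient identity for $\overline{R_6^\ast}(3n+2)$ is in hand, the final extraction and the congruence $\overline{R_6^\ast}(6n+5)\equiv 0\pmod 8$ should follow immediately.
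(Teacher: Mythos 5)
Your overall strategy---3-dissect the generating function \eqref{eq:gf-rast} with $\ell=6$ so as to reach $\sum_{n\ge 0}\overline{R_6^\ast}(3n+2)q^n$ as an explicit eta-quotient, and then show that modulo $8$ this series has no odd-exponent terms---is the same route the paper takes, but your step (2) contains a genuine gap which you flag without resolving, and as written it would fail. You cannot 3-dissect $1/f_1^2$ by itself and then ``collect the $q^{3m+2}$ terms'' after multiplying by $f_2f_6$: while $f_6$ is a series in $q^6$ and hence preserves residue classes modulo $3$, the even exponents coming from $f_2$ land in all three residue classes, so the bookkeeping you defer is exactly the crux. The paper avoids this entirely by dissecting the product $f_2/f_1^2$ as a single unit via the Hirschhorn--Sellers identity \eqref{HirSel3diss}; since the remaining factor $f_6$ is a function of $q^3$, extracting the $q^{3n+2}$ terms is then immediate and yields the exact identity $\sum_{n\geq 0}\overline{R_6^\ast}(3n+2)q^n = 4f_2^3f_6^3/f_1^6$ (equation \eqref{e1.010101}), whose explicit leading factor $4$ is what makes a modulus of $8$ attainable at all. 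Without that exact constant, no amount of mod-$2$ or mod-$4$ juggling of the eta-quotient will give you the full power of $2$.

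Your ``cleaner alternative'' for steps (3)--(4) is the right idea, and once \eqref{e1.010101} is in hand it closes the proof in one line: $f_1^2\equiv f_2\pmod 2$ gives $f_2^3/f_1^6\equiv 1\pmod 2$, hence $\sum_{n\ge 0}\overline{R_6^\ast}(3n+2)q^n\equiv 4f_6^3\pmod 8$, and $4f_6^3$ is a series in $q^6$, so every coefficient at an odd index---in particular at $2n+1$---is $\equiv 0\pmod 8$, which is precisely $\overline{R_6^\ast}(6n+5)\equiv 0\pmod 8$. (The paper additionally invokes Jacobi's identity \eqref{e2.0.3.3} to pin the support of $f_6^3$ down to exponents $3k(k+1)$, but that refinement is only needed for Theorems \ref{thmjames2} and \ref{thmjames3}; for the present statement the $q^6$-support suffices.) By contrast, your proposed machinery of a further $2$-dissection, or tracking $2$-adic valuations of binomial coefficients in $(1-q)^{-2}$, is unnecessary here and does not substitute for the missing exact dissection identity: the factor of $8$ comes from the explicit $4$ produced by the 3-dissection together with a single mod-$2$ reduction, not from any subsequent $2$-dissection.
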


\begin{theorem}\label{thmjames2}
    For all $n\geq 0$, we have \[\overline{R_6^\ast}(3(3n+1)+2) \equiv \overline{R_6^\ast}(3(3n+2)+2) \equiv0 \pmod{8}.\]  That is, for all $n\geq 0$, we have \[\overline{R_6^\ast}(9n+5) \equiv \overline{R_6^\ast}(9n+8) \equiv0 \pmod{8}.\] 
\end{theorem}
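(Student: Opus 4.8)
The plan is to start from the generating function \eqref{eq:gf-rast} with $\ell = 6$, namely
\[
\sum_{n\geq 0}\overline{R_6^\ast}(n)q^n = \frac{f_2 f_6}{f_1^2},
\]
and perform a $3$-dissection to isolate the subprogression $9n + 5$ (equivalently $3(3n+1)+2$) and $9n+8$ (equivalently $3(3n+2)+2$). By Theorem \ref{thmjames1} we already know the coarser progression $3(2n+1)+2 = 6n+5$ satisfies $\overline{R_6^\ast}(6n+5)\equiv 0 \pmod 8$; the residues $5$ and $8$ modulo $9$ both lie in the class $2 \pmod 3$, so a natural route is to first extract the generating function for $\overline{R_6^\ast}(3n+2)$ and then dissect that once more modulo $3$. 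First I would use a standard $3$-dissection of $f_2 f_6 / f_1^2$ (or, more conveniently, of $1/f_1^2$ via the known $3$-dissection of $1/f_1$ together with the fact that $f_2, f_6$ only contribute powers of $q$ that are multiples of $2$ and $6$) to write
\[
\sum_{n\geq 0}\overline{R_6^\ast}(3n+2)q^n = (\text{an explicit eta-quotient in } f_1, f_2, f_3, \dots),
\]
and then perform a second $3$-dissection on this series to pick out the coefficients of $q^{3n+1}$ and $q^{3n+2}$.

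The key arithmetic input will be classical $2$-adic information about the relevant theta-type series: the $3$-dissection of $1/f_1^2$ and of $\psi(q)=f_2^2/f_1$, $\varphi(q)=f_2^5/(f_1^2 f_4^2)$, etc., all of which are available in the standard references. After the two dissections I expect the generating functions for $\overline{R_6^\ast}(9n+5)$ and $\overline{R_6^\ast}(9n+8)$ to each be expressible as $4$ or $8$ times an eta-quotient with integer coefficients, possibly after absorbing an internal factor of $2$ coming from $\overline{p}$-type series modulo powers of $2$ (using, e.g., $f_1^2 \equiv f_2 \pmod 2$ and $f_1^4 \equiv f_2^2 \pmod 4$). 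Combining the factor picked up in the first dissection with that picked up in the second should yield the full factor of $8$. Alternatively, and perhaps more cleanly, one can deduce the $9n+5$ and $9n+8$ congruences by combining Theorem \ref{thmjames1} (which handles $6n+5$, i.e.\ the odd multiples of $3$ that are $\equiv 2 \bmod 3$) with a separate dissection handling the even multiples of $3$ in the class $\equiv 2 \bmod 3$; since $\{9n+5, 9n+8\} \cup \{18n+11, \dots\}$ reorganizes the progression $3n+2$, a short bookkeeping argument reduces Theorem \ref{thmjames2} to Theorem \ref{thmjames1} plus one extra dissection identity.

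The main obstacle will be controlling the $2$-adic valuation through \emph{two} successive dissections: each dissection of $1/f_1^2$ introduces eta-quotients whose coefficients are only guaranteed even (not divisible by higher powers of $2$) unless one tracks the structure carefully, so the bulk of the work is algebraic bookkeeping to show that the surviving piece on $q^{3n+1}$ and $q^{3n+2}$ is genuinely divisible by $8$ rather than merely by $4$. I would handle this by writing every intermediate generating function as a polynomial in the $f_k$ with explicit integer coefficients, reducing modulo $16$ at each stage, and invoking the elementary congruences $f_1^2\equiv f_2$, $f_1^4 \equiv f_2^2 \pmod 2$ and their refinements modulo $4$ and $8$ to collapse the expressions; the final identity should then be a short explicit congruence that can be checked by matching finitely many coefficients.
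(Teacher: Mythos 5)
Your plan would work, but it is a genuinely different (and longer) route than the one the paper takes for this theorem. What you propose -- two successive $3$-dissections of $f_2f_6/f_1^2$ with careful tracking of powers of $2$ -- is exactly the computation the paper performs in the proof of Theorem \ref{elthm}: the first dissection via \eqref{HirSel3diss} gives $\sum_{n\geq 0}\overline{R_6^\ast}(3n+2)q^n=4f_2^3f_6^3/f_1^6$, and cubing \eqref{HirSel3diss} in the second step yields the explicit expansions $\sum_{n\geq 0}\overline{R_6^\ast}(9n+5)q^n=24(\cdots)$ and $\sum_{n\geq 0}\overline{R_6^\ast}(9n+8)q^n=96(\cdots)$, so your worry that the surviving pieces might only be divisible by $4$ does not materialize -- but resolving it does require carrying out that cube explicitly rather than ``expecting'' $4$ or $8$ times an eta-quotient, and no coefficient-matching step is needed or appropriate (the dissections are exact identities, and a finite check alone would not constitute a proof without a Sturm-type bound). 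The paper's own proof of Theorem \ref{thmjames2} is much shorter: reducing \eqref{e1.010101} modulo $8$ gives $\sum_{n\geq 0}\overline{R_6^\ast}(3n+2)q^n\equiv 4f_6^3\pmod 8$, and Jacobi's identity \eqref{e2.0.3.3} shows $f_6^3$ is supported on exponents $3k(k+1)$, which are multiples of $3$ (and even), so $\overline{R_6^\ast}(3n+2)\equiv 0\pmod 8$ for every $n\not\equiv 0\pmod 3$ at once; the same lacunarity observation simultaneously proves Theorems \ref{thmjames1} and \ref{thmjames3}. In short, your double dissection buys the stronger moduli $24$ and $96$ (i.e.\ \eqref{amun-3} and \eqref{amun-4}) at the cost of real computation, while the paper's one-dissection-plus-triple-product argument buys a uniform, essentially computation-free proof of all three mod-$8$ theorems. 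Finally, your suggested shortcut of reducing to Theorem \ref{thmjames1} by bookkeeping does not work as stated: the terms of $9n+5$ and $9n+8$ whose inner argument $3n+1$ or $3n+2$ is even (e.g.\ $18m+14=3(6m+4)+2$) are not covered by Theorem \ref{thmjames1}, and closing that gap is precisely where the lacunarity of $f_6^3$ (or your second dissection) is needed.
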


\begin{theorem}\label{thmjames3}
    Let $p\geq 5$ be prime, and let $r$, $1\leq r\leq p-1$, be such that $\text{inv}(3,p)\cdot 4r +1$ is a quadratic nonresidue modulo $p$ where $inv(3,p)$ is the inverse of 3 modulo $p$.  Then, for all $n\geq 0$, we have \[\overline{R_6^\ast}(3(pn+r)+2) \equiv 0 \pmod{8}.\]
\end{theorem}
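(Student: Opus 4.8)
The plan is to compress Theorems \ref{thmjames1}--\ref{thmjames3} into a single generating-function congruence modulo $8$ for the subsequence $\overline{R_6^\ast}(3m+2)$, and then to dispatch \ref{thmjames3} by a completing-the-square argument on triangular numbers. Put $G(q):=\sum_{m\ge 0}\overline{R_6^\ast}(3m+2)q^m$. Applying \eqref{e2.0.3.4} with $q$ replaced by $q^6$ shows that every exponent of $f_6$ is divisible by $3$, so \eqref{eq:gf-rast} gives $G(q)=f_2\,H(q)$ where $H(q):=\sum_{n\ge 0}\overline{p}(3n+2)q^n$. The crux is the identity $G(q)\equiv 4f_6^{\,3}\pmod 8$, which I would establish as follows: $3$-dissect $\tfrac{f_2}{f_1^2}$ using the classical $3$-dissection $\tfrac{f_1^2}{f_2}=\tfrac{f_9^2}{f_{18}}-2q\,\tfrac{f_3f_{18}^2}{f_6f_9}=:A+qB$ (with $A,B\in\mathbb Z[[q^3]]$) together with $\tfrac1{A+qB}=\tfrac{A^2-qAB+q^2B^2}{A^3+q^3B^3}$; the piece supported on exponents $\equiv 2\pmod 3$, after replacing $q^3$ by $q$, becomes $H(q)=\dfrac{4f_1^2f_6^7/(f_2^2f_3^8)}{1-8q\,f_1^3f_6^9/(f_2^3f_3^9)}$, hence $H(q)\equiv 4\tfrac{f_1^2f_6^7}{f_2^2f_3^8}\pmod{32}$ and $G(q)=f_2H(q)\equiv 4\tfrac{f_1^2f_6^7}{f_2f_3^8}\pmod{32}$; reducing modulo $2$ with $f_1^2\equiv f_2$ and $f_3^8\equiv f_6^4$ collapses this to $G(q)\equiv 4f_6^{\,3}\pmod 8$. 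This is presumably the same identity used for Theorems \ref{thmjames1} and \ref{thmjames2}, so in the write-up I would prove it once and simply invoke it here.

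Next, Jacobi's identity gives $f_6^{\,3}=\sum_{m\ge 0}(-1)^m(2m+1)q^{3m(m+1)}$, and since $4\cdot(\text{odd})\equiv 4\pmod 8$ we obtain, for every $j\ge 0$,
\[
\overline{R_6^\ast}(3j+2)\equiv\begin{cases}4\pmod 8,&\text{if }j=3m(m+1)\text{ for some }m,\\ 0\pmod 8,&\text{otherwise.}\end{cases}
\]
Because $3m(m+1)$ is always even and divisible by $3$, this already yields Theorem \ref{thmjames1} (where $j=2n+1$ is odd) and Theorem \ref{thmjames2} (where $j\not\equiv 0\pmod 3$).

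For Theorem \ref{thmjames3}, fix a prime $p\ge 5$ and $r$ with $\text{inv}(3,p)\cdot 4r+1$ a quadratic nonresidue modulo $p$, and let $j=pn+r$, so $j\equiv r\pmod p$. If $j$ were of the form $3m(m+1)$, then $3m^2+3m\equiv r\pmod p$; multiplying by $\text{inv}(3,p)$ (legitimate since $p\ge 5$) and using $4m^2+4m+1=(2m+1)^2$ gives $(2m+1)^2\equiv \text{inv}(3,p)\cdot 4r+1\pmod p$, exhibiting $\text{inv}(3,p)\cdot 4r+1$ as a quadratic residue — contradicting the hypothesis. Hence no integer $\equiv r\pmod p$ is a value of $3m(m+1)$, and by the case analysis above $\overline{R_6^\ast}(3(pn+r)+2)\equiv 0\pmod 8$ for all $n\ge 0$.

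The only genuine obstacle is the first step: carrying out the $3$-dissection and the subsequent mod-$2$ simplification carefully enough to land on $G(q)\equiv 4f_6^{\,3}\pmod 8$ — the bookkeeping of the $\eta$-powers in the rationalised quotient is where slips occur. Everything after that is routine; the argument for \ref{thmjames3} is the familiar observation that a binary quadratic form omits precisely those residue classes on which its discriminant is a nonresidue, here specialised to the form $3m(m+1)$.
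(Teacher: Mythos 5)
Your proposal is correct and follows essentially the same route as the paper: the heart of the argument --- reducing to the support of $4f_6^3 \pmod 8$ on exponents $3k(k+1)$ via \eqref{e2.0.3.3}, then completing the square to contradict the nonresidue hypothesis on $\mathrm{inv}(3,p)\cdot 4r+1$ --- is exactly the paper's proof. The only (immaterial) difference is that you rederive the key congruence $\sum_{n\geq 0}\overline{R_6^\ast}(3n+2)q^n\equiv 4f_6^3\pmod 8$ by $3$-dissecting $f_1^2/f_2$ and rationalizing, whereas the paper obtains it directly from the $3$-dissection \eqref{HirSel3diss} of $f_2/f_1^2$ (see \eqref{e1.010101}), which shortens that step considerably.
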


\noindent Theorems \ref{thmjames1}--\ref{thmjames3} are proved in Section \ref{sec-elm3}.

All of the above results are proved using elementary techniques. We now move on to results which we prove using the theory of modular forms.

\begin{theorem}\label{thm1.00}
	Let $k, n$ be nonnegative integers. For each $i$ with $1\leq i \leq k+1$, if $p_i \geq 3$ is prime such that $p_i \not\equiv 1 \pmod 8$, then for any integer $j \not\equiv 0 \pmod {p_{k+1}}$
	\begin{align*} 
	\overline{R_6^\ast}\left(18p_1^2\dots p_{k+1}^2n + \frac{9p_1^2\dots p_{k}^2p_{k+1}(4j+p_{k+1})-1}{4}\right) \equiv 0 \pmod{8}.
\end{align*}
\end{theorem}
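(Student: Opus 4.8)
The plan is to realize the relevant generating function as a modular form (or a suitable eta-quotient whose coefficients we can control modulo $8$), and then run a Hecke-operator / Serre-type argument to produce the infinite family. First I would recall from \eqref{eq:gf-rast} that
\[
\sum_{n\geq 0}\overline{R_6^\ast}(n)q^n=\frac{f_2f_6}{f_1^2}.
\]
By Theorem~\ref{thmjames2} we already know $\overline{R_6^\ast}(9n+5)\equiv 0\pmod 8$, so writing $9n+5 = 3(3n+1)+2$, the natural object of study is the arithmetic progression $\overline{R_6^\ast}(9n+5)$ (the case $k=0$, $j\equiv 1$ of the claimed statement, up to identifying the residue). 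I would extract the subseries $\sum_{n\ge 0}\overline{R_6^\ast}(9n+5)q^n$ using $3$-dissection of $f_2f_6/f_1^2$ (the dissection underlying Theorem~\ref{thmjames2}), divide out the common factor $8$, and show that the resulting series, after multiplication by an appropriate eta-product to clear denominators and fix the weight and level, is (congruent modulo some power of $2$ to) a holomorphic modular form of integral or half-integral weight on $\Gamma_0(N)$ with $N$ supported on the primes $2,3$.

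Next I would invoke the standard machinery: a holomorphic modular form of half-integral weight whose $q$-expansion is supported, modulo $8$, on coefficients indexed by $n\equiv 5\pmod 9$, can be analyzed via the Shimura lift or directly via Hecke operators $T_{p^2}$ for primes $p\nmid 6N$. The hypothesis $p_i\not\equiv 1\pmod 8$ is exactly the condition needed so that the eigenvalue of $T_{p_i^2}$ acting on the relevant (one-dimensional, or sufficiently small) space is $\equiv 0\pmod 8$ — this is the kind of statement that follows from a CM / theta-series identification, where $T_{p^2}$ multiplies a coefficient by something like $\left(1+\left(\frac{-1}{p}\right)\right)$ or $\left(1+\left(\frac{2}{p}\right)\right)$ times a unit, and $p\not\equiv 1\pmod 8$ forces one of these quadratic symbols to be $-1$. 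Applying $T_{p_1^2},\dots,T_{p_{k+1}^2}$ in succession, and using that each kills the ``old'' part while the ``new'' part at $p_{k+1}$ (indices $j\not\equiv 0\pmod{p_{k+1}}$) survives only with a factor divisible by $8$, yields precisely the progression
\[
18p_1^2\cdots p_{k+1}^2 n+\frac{9p_1^2\cdots p_k^2 p_{k+1}(4j+p_{k+1})-1}{4},
\]
the messy linear form being just the bookkeeping of how $T_{p^2}$ transforms an arithmetic progression $an+b$ into $ap^2 n + (\text{shifted } b)$.

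I expect the main obstacle to be the modular-forms setup rather than the Hecke step: one must identify $\sum\overline{R_6^\ast}(9n+5)q^n/8$ (or a small modification of it) with a \emph{holomorphic} form of a clean half-integral weight on $\Gamma_0(N)$ and verify the eigenform / CM structure that makes the $T_{p^2}$-eigenvalues explicitly computable mod $8$. Concretely, the hard part is: (i) producing the right auxiliary eta-product multiplier so the quotient lies in $M_{k/2}(\Gamma_0(N),\chi)$ with the correct character, using the Ligozat/Newman conditions to check holomorphy at all cusps; and (ii) pinning down that the resulting space is small enough (ideally spanned by a single CM newform coming from a binary quadratic form associated with $x^2+2y^2$ or $x^2+y^2$, which is where $p\not\equiv 1\pmod 8$ enters) so that the eigenvalue computation is unconditional. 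Once (i) and (ii) are in place, the iterated application of Hecke operators and the translation back to the stated congruence is routine, following the template of Sellers' infinite families quoted in the introduction and the standard approach for overpartition congruences via modular forms.
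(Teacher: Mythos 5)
There is a genuine gap, and it starts with the choice of arithmetic progression. Every index appearing in Theorem \ref{thm1.00} is congruent to $2$ modulo $9$ (the numerator $9p_1^2\cdots p_k^2p_{k+1}(4j+p_{k+1})-1$ is $\equiv -1 \pmod 9$, and $4^{-1}\equiv 7\pmod 9$), so these progressions are disjoint from $9n+5$ and $9n+8$; indeed the paper stresses that, e.g., $\overline{R_6^\ast}(162n+27j+20)\equiv 0\pmod 8$ is \emph{not} covered by Theorem \ref{thmjames2}. Your plan of extracting $\sum_n\overline{R_6^\ast}(9n+5)q^n$, dividing out the factor $8$ already guaranteed by Theorem \ref{thmjames2}, and studying that quotient therefore cannot produce the stated congruences: the content of the theorem lives in the complementary progression where the mod-$8$ congruence does \emph{not} hold identically, namely $\overline{R_6^\ast}(18n+2)$, and the task is to isolate sub-progressions of it on which the relevant coefficients vanish.

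The second gap is that the step you defer as ``the hard part'' --- identifying the correct modular object and its eigenstructure --- is precisely the proof. The paper's route is much lighter than the half-integral-weight/Shimura-lift machinery you invoke: by Lemma \ref{lm}, $\sum_{n\geq 0}\overline{R_6^\ast}(18n+2)q^n\equiv 4f_1^3\pmod 8$, and after $q\mapsto q^4$ and multiplying by $q$ this is $4\eta(4z)^6$, an \emph{integer}-weight form in $S_3(\Gamma_0(16))$ which is a Hecke eigenform with coefficients $a(n)$ supported on $n\equiv 1\pmod 8$. Hence for $p\not\equiv 1\pmod 8$ one gets $\lambda(p)=a(p)=0$ exactly (not an eigenvalue divisible by $8$, as you suggest), and the Hecke recursion \eqref{hecke1} yields $a(p^2n+pr)=0$ for $p\nmid r$ together with $a(p^2n)\equiv a(n)\pmod 2$; iterating over $p_1,\dots,p_{k+1}$ and translating indices gives the stated family, the factor $4$ in front converting the mod-$2$ information into a mod-$8$ congruence. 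Without pinning down this specific eta-quotient (or an equivalent CM identification) your outline never establishes the eigenvalue vanishing on which the whole argument rests, so as written it is a plan rather than a proof; moreover points (i)--(ii) of your outline, if pursued for the $9n+5$ subseries, would at best address higher-power congruences on a different progression.
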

\noindent Theorem \ref{thm1.00} is proved in Section \ref{sec:mf1}. There are arithmetic progressions covered by Theorem \ref{thm1.00}, which are not covered by Theorems \ref{thmjames1} and \ref{thmjames2} above. For instance, if we consider $k=1$, $p_1=11$, $p_2=13$ and $j=2$ in Theorem \ref{thm1.00}, then for all $n\geq 0$ we get 
\begin{equation*}
    \overline{R_6^\ast}\left(368082n+74324\right) \equiv 0 \pmod{8}.
\end{equation*}
Let $p\geq 3$ be a prime such that $p \not\equiv 1\pmod{8}$. By taking all the primes $p_1, p_2, \ldots, p_{k+1}$ to be equal to the same prime $p$ in Theorem \ref{thm1.00}, 
we obtain the following infinite family of congruences for $\overline{R_6^\ast}(n)$:
	\begin{align*} 
	\overline{R_6^\ast}\left( 18p^{2(k+1)}n + 9p^{2k+1}j + \frac{9 p^{2(k+1)}-1}{4}\right) \equiv 0 \pmod 8,
	\end{align*}
	where  $j \not\equiv 0 \pmod p$. As an example, for all $n\geq 0$ and $j\not\equiv 0\pmod{3}$, taking $k=0$ and $p=3$, we have
	\begin{align*} 
	\overline{R_6^\ast}\left(162n + 27j + 20\right) \equiv 0 \pmod{8}.
	\end{align*}
\noindent We also note that the above congruence is not covered by Theorem \ref{thmjames2}.

For the next result, we require the concept of the Legendre symbol, which for a prime $p\geq3$ is defined as
\begin{align*}
\left(\dfrac{s}{p}\right)_L:=\begin{cases}\quad1,\quad \text{if $s$ is a quadratic residue modulo $p$ and $p\nmid s$,}\\\quad 0,\quad \text{if $p\mid s$,}\\~-1,\quad \text{if $s$ is a quadratic nonresidue modulo $p$.}
\end{cases}
\end{align*}

We need some notation before we state the next result. The Dedekind $\eta$- function is given by
\begin{equation*}
    \eta(z) := q^{1/24}f_1,
\end{equation*}
where $q=e^{2\pi iz}$ and $z$ lies in the complex upper half plane $\mathbb{H}$. \\

The well known $\Delta$-function is denoted by
\begin{equation*}
    \Delta(z) := \eta(z)^{24} = \sum_{n \geq 1}\tau(n)q^n.
\end{equation*}

\begin{theorem}\label{t4}
Let $p$ be an odd prime. Suppose that $s$ is an integer satisfying $1 \leq s \leq 8p$, $s \equiv 1 \pmod{8}$, and $\left( \dfrac{s}{p} \right)_L = -1$. Then, we have  
\begin{equation}\label{e12.0.1}
    \overline{R_6^\ast}\left( 18pn + \dfrac{9s-1}{4} \right) \equiv 0 \pmod{8}.
\end{equation}

Furthermore, if $\tau(p) \equiv 0 \pmod{2}$, then for all $n \geq 0$ and $k \geq 1$, we have  
\begin{equation}\label{e12.0.2}
    \overline{R_6^\ast}\left( 18p^{2k+1}n+\dfrac{9sp^{2k}-1}{4} \right) \equiv 0 \pmod{8}.
\end{equation}
\end{theorem}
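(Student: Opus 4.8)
The plan is to reduce the statement modulo $8$ to a congruence between $q$-expansions of modular forms and then read off both assertions from the Hecke structure of the form that appears.

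First, a $2$-adic reduction. Since $f_1^{8}\equiv f_2^{4}\pmod 8$ and both series lie in $1+q\mathbb Z[[q]]$, equation \eqref{eq:gf-rast} with $\ell=6$ gives
\[
\sum_{n\ge 0}\overline{R_6^\ast}(n)q^n\;=\;\frac{f_2f_6f_1^{6}}{f_1^{8}}\;\equiv\;\frac{f_1^{6}f_6}{f_2^{3}}\;=\;\varphi(-q)^{3}f_6\pmod 8,\qquad \varphi(-q):=\frac{f_1^{2}}{f_2}=\sum_{a\in\mathbb Z}(-1)^aq^{a^2}.
\]
Expanding, $\overline{R_6^\ast}(n)$ is congruent modulo $8$ to a signed count of the representations $4n+1=(6k-1)^2+(2a)^2+(2b)^2+(2c)^2$. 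It is worth recording the shape of the relevant index: writing $n=18pm+\tfrac{9s-1}{4}$ we get $4n+1=9(8pm+s)$, so the hypothesis $s\equiv 1\pmod 8$ is the same as $n\equiv 2\pmod{18}$, while $\left(\tfrac sp\right)_L=-1$ says that $\tfrac{4n+1}{9}$ is a quadratic nonresidue modulo $p$, and in particular is not a perfect square.

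Next, the key congruence, obtained by sieving onto $n\equiv 2\pmod{18}$. By Theorem \ref{thm3n} (with $k=2$) we have $\overline{R_6^\ast}(n)\equiv 0\pmod 4$ whenever $n\equiv 2\pmod 3$, hence in particular for $n\equiv 2\pmod{18}$; so $\tfrac14\sum_{n\equiv 2\,(18)}\overline{R_6^\ast}(n)q^n$ has integer coefficients. Performing the sieve by averaging over additive characters keeps us, modulo $8$, inside a space of modular forms built from the weight-$2$ eta-quotient $\frac{\eta(z)^{6}\eta(6z)}{\eta(2z)^{3}}$ (equivalently, after $q\mapsto q^{4}$, from $\frac{\eta(4z)^{6}\eta(24z)}{\eta(8z)^{3}}$, which has integral $q$-expansion). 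The heart of the argument is then to prove
\[
\sum_{n\equiv 2\,(18)}\overline{R_6^\ast}(n)\,q^{n}\;\equiv\;4\,q^{2}\psi\!\left(q^{18}\right)\;=\;4\sum_{j\ge 0}q^{(9(2j+1)^2-1)/4}\pmod 8,\qquad \psi(q)=\sum_{t\ge 0}q^{t(t+1)/2},
\]
which, since $\tau(N)$ is odd exactly when $N$ is an odd square, is the same as saying $\overline{R_6^\ast}(n)\equiv 4\,\tau\!\big(\tfrac{4n+1}{9}\big)\pmod 8$ for $n\equiv 2\pmod{18}$. Both sides are, after clearing denominators in the exponents and multiplying the theta-side by a weight-$\tfrac32$ series that is $\equiv 1\pmod 2$, modular forms of weight $2$ on an explicit $\Gamma_0(N)$; their difference is divisible by $4$, and dividing by $4$ yields a weight-$2$ form all of whose coefficients must be shown to vanish modulo $2$, which by Sturm's bound is a finite computation.

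Finally, granting the key congruence, both families follow. For \eqref{e12.0.1}, with $n=18pm+\tfrac{9s-1}{4}$ we have $\tfrac{4n+1}{9}=8pm+s$, a nonresidue modulo $p$ and hence not a perfect square, so $\tau\!\big(\tfrac{4n+1}{9}\big)$ is even and $\overline{R_6^\ast}(n)\equiv 0\pmod 8$. For \eqref{e12.0.2} we have instead $\tfrac{4n+1}{9}=p^{2k}(8pm+s)$ with $\gcd(p,8pm+s)=1$; the form from the previous step, reduced modulo $2$ after division by $4$, is a Hecke eigenform whose eigenvalue system agrees modulo $2$ with that of $\Delta$, so its $p$-th eigenvalue is $\equiv\tau(p)\pmod 2$, and the hypothesis $\tau(p)\equiv 0\pmod 2$ together with the Hecke recursion for the coefficients at powers of $p$ forces the coefficient indexed by $p^{2k}(8pm+s)$ to vanish modulo $2$, i.e. $\overline{R_6^\ast}(n)\equiv 0\pmod 8$.

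The step I expect to be the real obstacle is the middle one: pinning down the level and nebentypus of the weight-$2$ eta-quotient and of the form cut out by the sieve (holomorphy and cuspidality at every cusp, via Ligozat's criteria) and then executing the Sturm-bound comparison. A secondary difficulty, present throughout, is that the working modulus is $8$ rather than a prime, so at each division by $2$ one must track carefully which modular-form congruence actually survives.
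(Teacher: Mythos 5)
Your reduction modulo $8$ and your endgame are fine, but the proposal has a genuine gap at exactly the point you flag yourself: the ``key congruence'', equivalently $\sum_{m\ge 0}\overline{R_6^\ast}(18m+2)q^m \equiv 4f_1^3 \equiv 4\psi(q) \pmod{8}$, is only asserted together with a sketch of a verification strategy. You never identify the sieved form, its level and nebentypus, the behaviour at the cusps, or the Sturm bound, and you do not carry out the finite computation; since the working modulus is $8$ rather than a prime, the division-by-$4$ and weight-adjustment steps also need justification you have not supplied. As written, the central step is a plan, not a proof. Everything downstream of it is correct: $\tau(N)$ is odd exactly when $N$ is an odd square, and for $n = 18pm + \frac{9s-1}{4}$ (resp.\ the argument in \eqref{e12.0.2}) the relevant index $(4n+1)/9$ equals $8pm+s$ (resp.\ $p^{2k}(8pm+s)$), which is a quadratic nonresidue modulo $p$ and hence not a square; this yields \eqref{e12.0.1} and \eqref{e12.0.2}. (In fact, by multiplicativity, $\tau\bigl(p^{2k}M\bigr)=\tau\bigl(p^{2k}\bigr)\tau(M)$ with $\tau(M)$ already even, so your route proves \eqref{e12.0.2} without using $\tau(p)\equiv 0 \pmod{2}$ at all.)

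For comparison, your key congruence is precisely Lemma \ref{lm} of the paper, and it has a two-line elementary proof that makes the machinery you were bracing for unnecessary: applying the $3$-dissection \eqref{HirSel3diss} to \eqref{eq:gf-rast} with $\ell=6$ and extracting $q^{3n+2}$ gives $\sum_{n\ge 0}\overline{R_6^\ast}(3n+2)q^n = 4f_2^3f_6^3/f_1^6 \equiv 4f_6^3 \pmod{8}$, and extracting $q^{6n}$ gives $4f_1^3$, which is $4\psi(q)$ modulo $8$ by \eqref{e2.0.3.3}. If you replace your middle step by this dissection argument, your proof closes, and its conclusion coincides with the paper's: the paper rescales to $4\Delta(z)$ modulo $8$ and uses the Hecke operator $T_p$ on the eigenform $\Delta$, which is equivalent to your use of the parity and multiplicativity of $\tau$.
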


\begin{example}
Substituting $s = 17$ and $p = 3$ into equations \eqref{e12.0.1} and \eqref{e12.0.2}, respectively, we obtain the following congruences, valid for all $n \geq 0$:
\begin{align*}
    \overline{R_6^\ast}\left(54n + 38\right) &\equiv 0 \pmod{8},\\
    \overline{R_6^\ast}\left(486n + 344\right) &\equiv 0 \pmod{8}.
\end{align*}
\end{example}

\noindent \textcolor{blue}{Theorem \ref{t4} is} proved in Section \ref{sec:mf2}.

This paper is organized as follows: in Section \ref{sec:prelim} we mention some preliminaries which are required for our proofs, Sections \ref{sec-elmthm} -- \ref{sec:mf2} contains the proofs of our results, and the paper ends with some concluding remarks in Section \ref{sec:concl}.

\section{Preliminaries}\label{sec:prelim}

\subsection{Elementary Results}

We need Jacobi's Triple Product identity \cite[Eq. (1.3.24)]{Spirit}
\begin{equation}\label{e2.0.3.3}
    f_1^3=\sum_{n\geq 0}(-1)^n(2n+1)q^{n(n+1)/2}.
\end{equation}

We also need the following $2$-dissection formulas.
\begin{lemma}
    We have 
    \begin{align}
        \frac{1}{f_{1}^2} &= \frac{f_{8}^5}{f_{2}^5 f_{16}^2} + 2 q \frac{f_{4}^2 f_{16}^2 }{f_{2}^5 f_{8}},\label{1/f1^2}\\
        \frac{1}{f_{1}^4} &= \frac{f_{4}^{14}}{f_{2}^{14} f_{8}^4} + 4 q \frac{f_{4}^2 f_{8}^4}{f_{2}^{10}}.\label{1/f1^4}
    \end{align}
\end{lemma}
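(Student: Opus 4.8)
The plan is to derive both dissections from the classical $2$-dissection of Ramanujan's theta function $\varphi(q)=\sum_{n\in\mathbb Z}q^{n^2}$ together with the standard product evaluations $\varphi(q)=f_2^5/(f_1^2f_4^2)$ and $\psi(q)=f_2^2/f_1$, where $\psi(q)=\sum_{n\ge 0}q^{n(n+1)/2}$. These give at once
\[
\frac{1}{f_1^2}=\frac{f_4^2}{f_2^5}\,\varphi(q),\qquad \frac{1}{f_1^4}=\frac{f_4^4}{f_2^{10}}\,\varphi(q)^2,
\]
so it suffices to $2$-dissect $\varphi(q)$ and $\varphi(q)^2$ and substitute the relevant product forms.

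For \eqref{1/f1^2} I would split $\sum_{n\in\mathbb Z}q^{n^2}$ by the parity of $n$: the even terms $n=2k$ contribute $\varphi(q^4)$, while the odd terms $n=2k+1$ contribute $q\sum_{k\in\mathbb Z}q^{4k(k+1)}=2q\psi(q^8)$ (pairing $k$ with $-k-1$). Hence $\varphi(q)=\varphi(q^4)+2q\psi(q^8)$. Inserting the product forms $\varphi(q^4)=f_8^5/(f_4^2f_{16}^2)$ and $\psi(q^8)=f_{16}^2/f_8$ and multiplying through by $f_4^2/f_2^5$ produces exactly the right-hand side of \eqref{1/f1^2}.

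For \eqref{1/f1^4} I would first establish $\varphi(q)^2=\varphi(q^2)^2+4q\,\psi(q^4)^2$. Writing $\varphi(q)^2=\sum_n r_2(n)q^n$ with $r_2(n)$ the number of representations of $n$ as an ordered sum of two squares, the even part is $\sum_m r_2(2m)q^{2m}=\varphi(q^2)^2$, since the map $(a,b)\mapsto(a+b,a-b)$ gives a bijection showing $r_2(2m)=r_2(m)$; splitting $\sum_{m,n}q^{m^2+n^2}$ by the parities of $(m,n)$ shows the odd part equals $4q\,\varphi(q^4)\psi(q^8)$, which the elementary identity $\varphi(q)\psi(q^2)=\psi(q)^2$ (immediate from the product forms) rewrites as $4q\,\psi(q^4)^2$. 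Then substituting $\varphi(q^2)^2=f_4^{10}/(f_2^4f_8^4)$ and $\psi(q^4)^2=f_8^4/f_4^2$ into $\tfrac{1}{f_1^4}=\tfrac{f_4^4}{f_2^{10}}\varphi(q)^2$ and simplifying gives \eqref{1/f1^4}.

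The computations are entirely routine, so there is no genuine obstacle here; the only mild care needed is in the eta-quotient bookkeeping and, for \eqref{1/f1^4}, in handling the even part of $\varphi(q)^2$ without circularity — which is why I favour the $r_2(2m)=r_2(m)$ argument over squaring the dissection of $\varphi(q)$. Alternatively, both $2$-dissections are recorded in standard references (e.g. Hirschhorn's book on $q$-series), and one could simply cite them.
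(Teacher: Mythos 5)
Your proposal is correct, but it proceeds quite differently from the paper: the paper does not derive these dissections at all, it simply cites them, with \eqref{1/f1^2} taken from Hirschhorn's book \cite[Eq. (1.9.4)]{power} and \eqref{1/f1^4} from \cite[Lemma 2.2]{SellersJMAA} (the fallback you mention in your last sentence is exactly what the authors do). Your self-contained route checks out: the product forms $\varphi(q)=f_2^5/(f_1^2f_4^2)$ and $\psi(q)=f_2^2/f_1$ give $1/f_1^2=(f_4^2/f_2^5)\varphi(q)$ and $1/f_1^4=(f_4^4/f_2^{10})\varphi(q)^2$; the parity split $\varphi(q)=\varphi(q^4)+2q\psi(q^8)$ and the substitutions $\varphi(q^4)=f_8^5/(f_4^2f_{16}^2)$, $\psi(q^8)=f_{16}^2/f_8$ yield \eqref{1/f1^2} exactly; and for \eqref{1/f1^4} your identity $\varphi(q)^2=\varphi(q^2)^2+4q\psi(q^4)^2$ is established soundly, with the even part handled by the bijection giving $r_2(2m)=r_2(m)$ and the odd part $4q\varphi(q^4)\psi(q^8)$ converted via $\varphi(q)\psi(q^2)=\psi(q)^2$ (which indeed follows instantly from the product forms), after which the eta-quotient bookkeeping reproduces the stated right-hand side. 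What your approach buys is a proof from first principles that makes the lemma independent of external references; what the paper's approach buys is brevity, since both dissections are standard and recorded in the literature. Your caution about circularity is reasonable but not essential: one could also square the dissection of $\varphi(q)$ and identify the even part with $\varphi(q^2)^2$ using the same dissection applied to $q^2$, though your $r_2(2m)=r_2(m)$ argument is cleaner.
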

\begin{proof}
    Equation \eqref{1/f1^2} is \cite[Eq. (1.9.4)]{power} and Equation \eqref{1/f1^4} can be found in \cite[Lemma 2.2]{SellersJMAA}.   
\end{proof}

\begin{lemma}
    We have the following generating functions
    \begin{align}
\sum_{n\geq 0}\overline{R_8^\ast}(2n)q^n&=\frac{f_4^6}{f_1^4f_8^2}, \label{e 2n}\\
\sum_{n\geq 0}\overline{R_8^\ast}(2n+1)q^n&=2\frac{f_2^2f_8^2}{f_1^4},\label{e 2n+1}.
\end{align}
\end{lemma}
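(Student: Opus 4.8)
The plan is to derive both identities simultaneously by performing a $2$-dissection of the generating function \eqref{eq:gf-rast} in the case $\ell = 8$, namely
\[
\sum_{n\geq 0}\overline{R_8^\ast}(n)q^n = \frac{f_2 f_8}{f_1^2},
\]
using the dissection formula \eqref{1/f1^2} for $1/f_1^2$.

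First I would substitute \eqref{1/f1^2} into the above and simplify. Multiplying $f_2 f_8$ through the two terms of \eqref{1/f1^2} gives, after cancellation,
\[
\frac{f_2 f_8}{f_1^2} = \frac{f_8^6}{f_2^4 f_{16}^2} + 2q\,\frac{f_4^2 f_{16}^2}{f_2^4}.
\]
The key observation is then a parity check on the powers of $q$: every eta-quotient appearing in the first summand involves only the series $f_2, f_8, f_{16}$, each of which is a power series in $q^2$, so the first summand is a power series in $q^2$; likewise $f_4^2 f_{16}^2/f_2^4$ is a power series in $q^2$, so the second summand is $q$ times a power series in $q^2$. Hence the first summand collects exactly the terms $\overline{R_8^\ast}(2n)q^{2n}$ and the second collects exactly the terms $\overline{R_8^\ast}(2n+1)q^{2n+1}$.

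From here I would extract each piece and then apply the substitution $q^2 \mapsto q$ (equivalently, replace each $f_{2k}$ by $f_k$). For the even part this sends $\frac{f_8^6}{f_2^4 f_{16}^2}$ to $\frac{f_4^6}{f_1^4 f_8^2}$, yielding \eqref{e 2n}; for the odd part, after first dividing by $q$, it sends $2\,\frac{f_4^2 f_{16}^2}{f_2^4}$ to $2\,\frac{f_2^2 f_8^2}{f_1^4}$, yielding \eqref{e 2n+1}. There is no real obstacle here: the only thing to be careful about is the bookkeeping of indices under the dilation $q^2\mapsto q$, and confirming that the two summands genuinely separate along the parity of the exponent, which is immediate from the structure of the eta-quotients.
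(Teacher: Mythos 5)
Your proposal is correct and is exactly the paper's argument: substitute the $2$-dissection \eqref{1/f1^2} into \eqref{eq:gf-rast} with $\ell=8$ and split by parity of the exponent; your intermediate identity and the final substitutions check out. The paper merely states this in one line, so your write-up simply supplies the routine details.
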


\begin{proof}
    Equations \eqref{e 2n} and \eqref{e 2n+1} follow from applying \eqref{1/f1^2} in \eqref{eq:gf-rast} with $\ell = 8$.
\end{proof}

We need the following $3$-dissection formula.
\begin{lemma}
    We have 
\begin{equation}\label{HirSel3diss} \frac{f_2}{f_1^2}=\frac{f_6^4f_9^6}{f_3^8f_{18}^3}+2q\frac{f_6^3f_9^3}{f_3^7}+4q^2\frac{f_6^2f_{18}^3}{f_3^6}.
\end{equation}
\end{lemma}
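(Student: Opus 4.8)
The function $\dfrac{f_2}{f_1^2}=\sum_{n\ge0}\overline{p}(n)q^n$ is the overpartition generating function, and the stated $3$-dissection is classical (it is due to Hirschhorn and Sellers and can also be read off from \cite{power}), so one legitimate option is simply to cite it. If one wants a self-contained derivation, the plan is to use cube roots of unity. Let $\omega=e^{2\pi i/3}$. Since $\prod_{j=0}^{2}(1-\omega^{jn}x)$ equals $1-x^3$ when $3\nmid n$ and $(1-x^n)^3$ when $3\mid n$, grouping the factors of $f_1$ and of $f_2$ according to the residue of the index modulo $3$ gives the product evaluations
\[
f_1(q)\,f_1(\omega q)\,f_1(\omega^2 q)=\frac{f_3^4}{f_9},\qquad
f_2(q)\,f_2(\omega q)\,f_2(\omega^2 q)=\frac{f_6^4}{f_{18}}.
\]
Writing $H(q):=f_2/f_1^2$ and using the standard evaluation $\varphi(-x)=f_1(x)^2/f_2(x)$, where $\varphi(-x)=\sum_{n\in\mathbb Z}(-1)^n x^{n^2}$, these two identities combine to give
\[
H(q)=\frac{f_6^4 f_9^2}{f_3^8 f_{18}}\,\varphi(-\omega q)\,\varphi(-\omega^2 q),
\]
and the prefactor is already a function of $q^3$, so the task reduces to $3$-dissecting the single product $\varphi(-\omega q)\varphi(-\omega^2 q)$.

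The next step is to expand
\[
\varphi(-\omega q)\,\varphi(-\omega^2 q)=\sum_{m,n\in\mathbb Z}(-1)^{m+n}\,\omega^{\,m^2+2n^2}\,q^{\,m^2+n^2}
\]
and split the lattice $\mathbb Z^2$ into the nine residue classes of $(m,n)$ modulo $3$. Writing $m=3a+r$, $n=3b+s$, the exponent $m^2+n^2$ on the class $(r,s)$ has fixed residue $r^2+s^2\bmod 3$, the phase $(-1)^{m+n}\omega^{m^2+2n^2}$ equals $(-1)^{a+b}(-1)^{r+s}\omega^{r^2+2s^2}$, and the contribution of the class factors as $(-1)^{r+s}\omega^{r^2+2s^2}\,\theta_r(q)\,\theta_s(q)$, where $\theta_r(q):=\sum_{a\in\mathbb Z}(-1)^a q^{(3a+r)^2}$. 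A short computation gives $\theta_{-1}=\theta_1$ and $\theta_2=-\theta_1$, and then summing the four classes with $r^2+s^2\equiv 1$, the four with $r^2+s^2\equiv 2$, and the single class $(0,0)$ collapses (using $\omega+\omega^2=-1$) to
\[
\varphi(-\omega q)\,\varphi(-\omega^2 q)=\theta_0(q)^2+2\,\theta_0(q)\theta_1(q)+4\,\theta_1(q)^2 .
\]
It then remains to identify the two series as eta-quotients: $\theta_0(q)=\varphi(-q^9)=f_9^2/f_{18}$, and, applying Jacobi's triple product (equivalently \eqref{e2.0.3.4}) to $\theta_1(q)=q\sum_{a\in\mathbb Z}(-1)^a q^{9a^2+6a}$, one obtains $\theta_1(q)=q\,f_3 f_{18}^2/(f_6 f_9)$. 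Substituting these into the displayed formula for $H(q)$ and simplifying the $\eta$-products separates the three residue classes modulo $3$ and yields exactly
\[
\frac{f_2}{f_1^2}=\frac{f_6^4 f_9^6}{f_3^8 f_{18}^3}+2q\frac{f_6^3 f_9^3}{f_3^7}+4q^2\frac{f_6^2 f_{18}^3}{f_3^6}.
\]

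There is no conceptual obstacle here; the work is bookkeeping, and the two error-prone points are exactly the ones above. First, one must track the nine phase factors in the lattice split carefully and verify the collapse to $\theta_0^2+2\theta_0\theta_1+4\theta_1^2$. Second, and more delicate, is the theta-to-$\eta$ identification of $\theta_1$: one has to recognize $q\,(q^3;q^{18})_\infty(q^{15};q^{18})_\infty(q^{18};q^{18})_\infty$ as $q\,f_3 f_{18}^2/(f_6 f_9)$, which is done by splitting $(q^3;q^6)_\infty$ according to residues modulo $18$. Because these steps are routine but lengthy, the efficient route in the paper is to quote the known identity from \cite{power} and, if desired, record the cube-root-of-unity derivation above as a remark.
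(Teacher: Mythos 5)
Your proposal is correct, but it takes a genuinely different route from the paper: the paper does not derive the identity at all, it simply cites \cite[Theorem 6]{hirschhorn2006arithmetic} (Hirschhorn--Sellers), which is exactly the ``legitimate option'' you mention in your first sentence. Your self-contained cube-root-of-unity argument checks out at every step I verified: the evaluations $f_1(q)f_1(\omega q)f_1(\omega^2 q)=f_3^4/f_9$ and $f_2(q)f_2(\omega q)f_2(\omega^2 q)=f_6^4/f_{18}$ are right, and since $f_2(\omega^j q)/f_1(\omega^j q)^2=1/\varphi(-\omega^j q)$ they do give $H(q)=\frac{f_6^4f_9^2}{f_3^8f_{18}}\varphi(-\omega q)\varphi(-\omega^2 q)$ with a prefactor in $q^3$; the nine-class phase bookkeeping (with $\theta_2=-\theta_1$ and $\omega+\omega^2=-1$) does collapse to $\theta_0^2+2\theta_0\theta_1+4\theta_1^2$, and these three products have exponents in the residue classes $0,1,2$ modulo $3$ respectively, so the dissection is genuine; finally $\theta_0=\varphi(-q^9)=f_9^2/f_{18}$ and, via the triple product, $\theta_1=q\,(q^3;q^{18})_\infty(q^{15};q^{18})_\infty(q^{18};q^{18})_\infty=q\,f_3f_{18}^2/(f_6f_9)$, and substituting produces precisely $\frac{f_6^4f_9^6}{f_3^8f_{18}^3}+2q\frac{f_6^3f_9^3}{f_3^7}+4q^2\frac{f_6^2f_{18}^3}{f_3^6}$. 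What the citation buys is brevity, which is all the paper needs since the lemma is classical; what your derivation buys is a self-contained proof that also isolates the reusable ingredients (the $3$-dissection of $\varphi(-\omega q)\varphi(-\omega^2 q)$ and the eta-quotient forms of $\theta_0,\theta_1$), at the cost of the bookkeeping you yourself flag. Either is acceptable; if you keep the derivation, write out the nine-class table and the $\theta_1$ identification explicitly, as those are the only places an error could hide.
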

\begin{proof}
For a proof of the above, see \cite[Theorem 6]{hirschhorn2006arithmetic}.    
\end{proof}

Finally, we have the following lemma.
\begin{lemma}\label{lm}
    We have
    \begin{equation}\label{e0.1}
       \sum_{n\geq 0}\overline{R_6^\ast}(18n+2)q^n \equiv 4f_1^3 \pmod{8}. 
    \end{equation}
\end{lemma}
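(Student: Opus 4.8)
The plan is to start from the generating function \eqref{eq:gf-rast} with $\ell = 6$, namely $\sum_{n\geq 0}\overline{R_6^\ast}(n)q^n = \dfrac{f_2 f_6}{f_1^2}$, and carry out a $3$-dissection to isolate the subprogression $18n + 2$. First I would apply the $3$-dissection formula \eqref{HirSel3diss} to $\dfrac{f_2}{f_1^2}$, then multiply the result by $f_6$. The factor $f_6 = \prod_{i\geq 1}(1 - q^{6i})$ is a power series in $q^3$, so multiplying by it does not mix the three residue classes modulo $3$ coming from \eqref{HirSel3diss}. I would then extract the terms in which the exponent of $q$ is $\equiv 2 \pmod 3$; from \eqref{HirSel3diss} this picks out the middle term $2q\,\dfrac{f_6^3 f_9^3}{f_3^7}$ multiplied by $f_6$, i.e.\ the contribution is governed by $2q\,\dfrac{f_6^4 f_9^3}{f_3^7}$. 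Replacing $q^3$ by $q$ (equivalently $f_{3k}\mapsto f_k$) gives
\[
\sum_{n\geq 0}\overline{R_6^\ast}(3n+2)q^n = 2\,\frac{f_2^4 f_3^3}{f_1^7}.
\]

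Next I would perform a further $3$-dissection of this series to reach the progression $18n + 2$, which corresponds to $n \equiv 0 \pmod 3$ in the series just obtained; equivalently I want the part of $2\,\dfrac{f_2^4 f_3^3}{f_1^7}$ supported on exponents $\equiv 0 \pmod 3$. Since everything is already multiplied by $2$, I only need to understand the relevant dissection component modulo $4$. The cleanest route is to reduce modulo the appropriate power of $2$ first: working modulo $8$ on the original claim means working modulo $4$ on the cofactor of the leading $2$. Using $f_1^2 \equiv f_2 \pmod 2$ and $f_1^4 \equiv f_2^2 \pmod 2$ (hence $f_1^8 \equiv f_2^4 \pmod 4$, via $(1+x)^2 \equiv 1 + x^2 \bmod 2$ squared) one can simplify $\dfrac{f_2^4 f_3^3}{f_1^7} = \dfrac{f_2^4 f_3^3 f_1}{f_1^8}$ and, modulo $4$, replace $f_1^8$ by $f_2^4$, giving $\dfrac{f_2^4 f_3^3 f_1}{f_1^8} \equiv f_1 f_3^3 \pmod{?}$ after cancellation — more carefully one gets a clean expression of the form $f_1 f_3^3$ or $f_1^3 f_9$ type modulo a small power of $2$. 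The key identity to bring in here is \eqref{e2.0.3.3}, $f_1^3 = \sum_{n\geq 0}(-1)^n(2n+1)q^{n(n+1)/2}$: a $3$-dissection of $f_1^3$ (separating triangular numbers $n(n+1)/2$ by residue mod $3$) isolates the $q^{3m}$-part, and after $q^3 \mapsto q$ one recognizes $4f_1^3$ emerging as the coefficient structure. So the sequence of moves is: (i) dissect $f_2/f_1^2$ mod $3$, extract class $2$; (ii) simplify the resulting eta-quotient modulo a power of $2$ using $f_k^2 \equiv f_{2k}\pmod2$; (iii) dissect the simplified form (essentially a cube of an eta-product) mod $3$ again using \eqref{e2.0.3.3} and Euler's pentagonal identity \eqref{e2.0.3.4}; (iv) read off that the $18n+2$ part reduces to $4f_1^3 \pmod 8$.

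The main obstacle I anticipate is step (ii)–(iii): getting the $2$-adic bookkeeping exactly right so that the stray $f_2$-powers cancel cleanly and the second $3$-dissection can be executed via the known series \eqref{e2.0.3.3} rather than an unwieldy eta-quotient dissection. In particular one must track that the single overall factor of $2$ combines with a factor of $2$ produced by the second dissection (triangular numbers $\tfrac{n(n+1)}{2}$ with $n \equiv 0, 2 \pmod 3$ both contribute, and the combinatorial coefficients $2n+1$ interact with the mod-$4$ reduction) to produce exactly $4f_1^3$ and an error term that is $\equiv 0 \pmod 8$. A convenient way to sidestep part of this is to first establish, modulo $8$, a two-variable simplification of $2\,\dfrac{f_2^4 f_3^3}{f_1^7}$ — for instance showing it is congruent to $2\,\dfrac{f_3^3}{f_1}\cdot(\text{something}\equiv f_1^{?})$ — and only then dissect; the pentagonal/triple-product identities then finish the job. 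Once the $18n+2$ component is pinned down as $4f_1^3 \pmod 8$, the statement of Lemma~\ref{lm} follows immediately.
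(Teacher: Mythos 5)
There is a genuine error at the very first step: in the $3$-dissection \eqref{HirSel3diss}, the middle term $2q\,\frac{f_6^3 f_9^3}{f_3^7}$ carries the exponents $\equiv 1 \pmod 3$, not $\equiv 2 \pmod 3$; the class-$2$ part is the last term $4q^2\,\frac{f_6^2 f_{18}^3}{f_3^6}$. Consequently your claimed identity $\sum_{n\geq 0}\overline{R_6^\ast}(3n+2)q^n = 2\,\frac{f_2^4 f_3^3}{f_1^7}$ is false — it is in fact the generating function of $\overline{R_6^\ast}(3n+1)$ (its constant term is $2$, whereas $\overline{R_6^\ast}(2)=4$). The correct extraction gives $\sum_{n\geq 0}\overline{R_6^\ast}(3n+2)q^n = 4\,\frac{f_2^3 f_6^3}{f_1^6}$, which is exactly \eqref{mun6n}/\eqref{e1.010101} in the paper. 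A second slip: $18n+2=3m+2$ forces $m\equiv 0\pmod 6$, so after the first dissection you must extract the $q^{6n}$-terms, not the $q^{3n}$-terms as you state (the latter would only reach the progression $9n+2$).

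With the correct starting point the elaborate program you sketch in steps (ii)--(iv) — reducing the cofactor modulo $4$, a second $3$-dissection via \eqref{e2.0.3.3}, and the attendant $2$-adic bookkeeping you yourself flag as the main obstacle — is unnecessary and, as written, is not actually carried out. Since $f_1^2\equiv f_2\pmod 2$ gives $f_2^3\equiv f_1^6\pmod 2$, one has at once $4\,\frac{f_2^3 f_6^3}{f_1^6}\equiv 4f_6^3\pmod 8$; and because $4f_6^3$ is a series in $q^6$, collecting the $q^{6n}$-terms and replacing $q^6$ by $q$ immediately yields $\sum_{n\geq 0}\overline{R_6^\ast}(18n+2)q^n\equiv 4f_1^3\pmod 8$. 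So your overall strategy (dissect \eqref{eq:gf-rast} with $\ell=6$ via \eqref{HirSel3diss}, then reduce modulo a power of $2$) is the paper's, but the misidentified dissection term and the wrong residue class in the second extraction derail the computation, and the remaining sketch does not recover $4f_1^3$ from the incorrect series $2\,\frac{f_2^4 f_3^3}{f_1^7}$.
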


\begin{proof}
 Using \eqref{eq:gf-rast} and \eqref{HirSel3diss}, we obtain
    \begin{equation}\label{e1.010101}
        \sum_{n\geq 0}\overline{R_6^\ast}(3n+2)q^n = 4\frac{f_2^3f_6^3}{f_1^6} \equiv 4\frac{f_1^6f_6^3}{f_1^6} \equiv 4f_6^3 \pmod{8}. 
    \end{equation}

Collecting the terms of the form $q^{6n}$ from both sides of the equation \eqref{e1.010101}, we get
		\begin{align}\label{z1}
		\sum_{n \geq 0}\overline{R_6^\ast}(18n+2)q^n \equiv 4f_1^3 \pmod{8}.
		\end{align}
\end{proof}

\subsection{Background related to modular forms}

We recall some basic facts and results from the theory of modular forms that will be useful in the proof of some of our results.

For a positive integer $N$, we will assume that:
\begin{align*}
\textup{SL}_2(\mathbb{Z}) & :=\left\{\begin{bmatrix}
a  &  b \\
c  &  d      
\end{bmatrix}: a, b, c, d \in \mathbb{Z}, ad-bc=1
\right\},\\
\Gamma_{0}(N) &:=\left\{
\begin{bmatrix}
a  &  b \\
c  &  d      
\end{bmatrix} \in \Gamma : c\equiv~0\pmod N \right\},\\
\Gamma_{1}(N) &:=\left\{
\begin{bmatrix}
a  &  b \\
c  &  d      
\end{bmatrix} \in \Gamma : a\equiv d \equiv 1\pmod N \right\},\\
\Gamma(N) &:=\left\{
\begin{bmatrix}
a  &  b \\
c  &  d      
\end{bmatrix} \in \textup{SL}_2(\mathbb{Z}) : a \equiv d \equiv 1 \pmod{N}, \text{and} \hspace{2mm} b\equiv c \equiv 0 \pmod N \right\}.
\end{align*}
A subgroup $\Gamma$ of $\textup{SL}_2(\mathbb{Z})$ is called a congruence subgroup if $\Gamma(N) \subseteq \Gamma$ for some $N$. The smallest $N$ such that $\Gamma(N) \subseteq \Gamma$ is called the level of $\Gamma$. For example, $\Gamma_{0}(N)$ and $\Gamma_{1}(N)$ are congruence subgroups of level $N$.\\

Let $\mathbb{H}:= \{ z \in \mathbb{C} : \Im(z) > 0 \}$ be the upper half of the complex plane. The group 

$$\textup{GL}_2^{+}(\mathbb{R}) = \left\{
\begin{bmatrix}
a  &  b \\
c  &  d      
\end{bmatrix} : a,b,c,d \in \mathbb{R} \hspace{2mm} \text{and} \hspace{2mm} ad-bc>0 \right\}$$ acts on $\mathbb{H}$ by $\begin{bmatrix}
a  &  b \\
c  &  d      
\end{bmatrix} z = \dfrac{az +b}{cz+d}$. We identify $\infty$ with $\dfrac{1}{0}$ and define $ \begin{bmatrix}
a  &  b \\
c  &  d      
\end{bmatrix} \dfrac{r}{s} = \dfrac{ar +bs}{cr+ds}$, where $\dfrac{r}{s} \in \mathbb{Q} \cup \{\infty\}$. This gives an action of $\textup{GL}_2^{+}(\mathbb{R})$ on the extended upper half-plane $\mathbb{H}^{\star} = \mathbb{H} \cup \mathbb{Q} \cup \{\infty\}$. Suppose that $\Gamma$ is a congruence subgroup of $\textup{SL}_2(\mathbb{Z})$. A cusp of $\Gamma$ is an equivalence class in $\mathbb{P}^{1}=\mathbb{Q} \cup \{ \infty\}$ under the action of $\Gamma$.\\

The group $\textup{GL}_2^{+}(\mathbb{R})$ also acts on the functions $f : \mathbb{H} \to \mathbb{C}$. In particular, suppose that $\gamma = \begin{bmatrix}
a  &  b \\
c  &  d      
\end{bmatrix} \in \textup{GL}_2^{+}(\mathbb{R}) $. If $f(z)$ is a meromorphic function on $\mathbb{H}$ and $\ell$ is an integer, then define the slash operator $\mid_{\ell}$ by 
$(f\mid_{\ell}\gamma )(z) := (det \gamma)^{\ell/2}(cz+d)^{-\ell}f(\gamma z)$.\\

\begin{definition}
Let $\gamma$ be a congruence subgroup of level N. A holomorphic function $f : \mathbb{H} \to \mathbb{C}$ is called a modular form with integer weight on $\Gamma$ if the following hold:
\begin{enumerate}
    \item We have
    \begin{align*}
        f\left(\frac{az+b}{cz+d}\right)=(cz+d)^{\ell}f(z)
    \end{align*}
    for all $z\in\mathbb{H}$ and all $\begin{bmatrix}
        a&b\\
        c&d
    \end{bmatrix}\in\Gamma$.
    \item If $\gamma\in\textup{SL}_2(\mathbb{Z})$, then $(f\mid_{\ell}\gamma )(z)$ has a Fourier expansion of the form
    \begin{align*}
        (f\mid_{\ell}\gamma )(z)=\sum_{n \geq 0}a_{\gamma}(n)q^n_{N},
    \end{align*}
    where $q^n_{N}=e^{\frac{2\pi iz}{N}}$.
\end{enumerate}
\end{definition}

For a positive integer $\ell$, let $M_\ell(\Gamma_1(N)))$ denote the complex vector space of modular forms of weight $\ell$ with respect to $\Gamma_1(N)$. 
\begin{definition}\cite[Definition 1.15]{ono2004}
If $\chi$ is a Dirichlet character modulo $N$, then a modular form $f\in M_\ell(\Gamma_1(N))$ has Nebentypus character $\chi$ if 
$f\left( \frac{az+b}{cz+d}\right)=\chi(d)(cz+d)^{\ell}f(z)$ for all $z\in \mathbb{H}$ and all $\begin{bmatrix}
a  &  b \\
c  &  d      
\end{bmatrix}\in \Gamma_0(N)$. The space of such modular forms is denoted by $M_\ell(\Gamma_0(N),\chi)$.
\end{definition}

\begin{theorem}\cite[Theorem 1.64]{ono2004}\label{thm_ono1} Suppose that $f(z)=\displaystyle\prod_{\delta\mid N}\eta(\delta z)^{r_\delta}$ 
		is an eta-quotient such that $\ell=\displaystyle\dfrac{1}{2}\sum_{\delta\mid N}r_{\delta}\in \mathbb{Z}$, $\sum_{\delta\mid N} \delta r_{\delta}\equiv 0 \pmod{24}$, and  $\sum_{\delta\mid N} \dfrac{N}{\delta}r_{\delta}\equiv 0 \pmod{24}$.
		Then, 
		$
		f\left( \dfrac{az+b}{cz+d}\right)=\chi(d)(cz+d)^{\ell}f(z)
		$
		for every  $\begin{bmatrix}
			a  &  b \\
			c  &  d      
		\end{bmatrix} \in \Gamma_0(N)$. Here 
\(
		    \chi(d):=\left(\dfrac{(-1)^{\ell} \prod_{\delta\mid N}\delta^{r_{\delta}}}{d}\right)_L.\label{chi}\)
	\end{theorem}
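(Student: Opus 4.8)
The plan is to prove the claimed functional equation — this is the classical Gordon--Hughes--Newman--Ligozat criterion — by reducing it to the transformation formula for a single Dedekind eta function and then to an identity among $24$-th roots of unity governed by Dedekind sums. I would begin by recalling that for $\gamma=\begin{bmatrix}a&b\\c&d\end{bmatrix}\in\mathrm{SL}_2(\mathbb{Z})$ with $c>0$ one has
\[
\eta(\gamma z)=\nu_\eta(\gamma)\,(cz+d)^{1/2}\,\eta(z),\qquad \nu_\eta(\gamma)=\exp\!\left(\pi i\left(\frac{a+d}{12c}-s(d,c)-\frac14\right)\right),
\]
where $s(d,c)$ is the Dedekind sum; equivalently, one may use the Petersson--Rademacher closed forms that express $\nu_\eta(\gamma)$ as a quadratic residue symbol times an explicit root of unity, with the exact shape depending on the parity of $c$. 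Since $-I\in\Gamma_0(N)$ and the case $c=0$ (i.e.\ $\gamma=\pm\begin{bmatrix}1&b\\0&1\end{bmatrix}$) is immediate, we may assume $c>0$, the sign $(-1)^\ell$ in the definition of $\chi$ being exactly what makes the reduction $\gamma\mapsto-\gamma$ consistent.

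Next, fix $\gamma=\begin{bmatrix}a&b\\c&d\end{bmatrix}\in\Gamma_0(N)$ and $\delta\mid N$. Because $\delta\mid N\mid c$, the matrix $\gamma_\delta:=\begin{bmatrix}a&\delta b\\c/\delta&d\end{bmatrix}$ lies in $\mathrm{SL}_2(\mathbb{Z})$ and satisfies $\gamma_\delta(\delta z)=\delta(\gamma z)$. Applying the eta transformation formula to $\gamma_\delta$ at the point $\delta z$, and using $(c/\delta)(\delta z)+d=cz+d$, gives $\eta(\delta\,\gamma z)=\nu_\eta(\gamma_\delta)\,(cz+d)^{1/2}\,\eta(\delta z)$. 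Raising this to the power $r_\delta$ and multiplying over all $\delta\mid N$ yields
\[
f(\gamma z)=\left(\prod_{\delta\mid N}\nu_\eta(\gamma_\delta)^{r_\delta}\right)(cz+d)^{\frac12\sum_{\delta\mid N}r_\delta}\,f(z)=\left(\prod_{\delta\mid N}\nu_\eta(\gamma_\delta)^{r_\delta}\right)(cz+d)^{\ell}\,f(z),
\]
where the branches match because $\sum_{\delta\mid N}r_\delta=2\ell$ is even. Thus the theorem reduces to the multiplier identity $\prod_{\delta\mid N}\nu_\eta(\gamma_\delta)^{r_\delta}=\chi(d)$.

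To prove this identity I would substitute the Dedekind-sum form of each $\nu_\eta(\gamma_\delta)$ and collect contributions. The terms $\tfrac{a+d}{12(c/\delta)}$ combine into an expression linear in $\sum_{\delta}\delta r_\delta$ and $\sum_{\delta}r_\delta$, while applying the Dedekind reciprocity law
\[
s(h,k)+s(k,h)=-\frac14+\frac1{12}\left(\frac hk+\frac kh+\frac1{hk}\right)\qquad(\gcd(h,k)=1,\ h,k>0)
\]
to each $s(d,c/\delta)$ — using $ad\equiv1\pmod c$ to handle residues — converts the Dedekind-sum part, after regrouping, into exponentials whose total exponent is an integer combination of $\tfrac1{24}\sum_{\delta\mid N}\delta r_\delta$ and $\tfrac1{24}\sum_{\delta\mid N}\tfrac N\delta r_\delta$. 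Consequently the two hypotheses $24\mid\sum_{\delta\mid N}\delta r_\delta$ and $24\mid\sum_{\delta\mid N}\tfrac N\delta r_\delta$, together with the evenness of $\sum_{\delta\mid N}r_\delta$, are precisely what force every exponential factor to equal $1$. The surviving quadratic-residue symbols, brought to the common odd denominator $d$ via quadratic reciprocity and combined multiplicatively, collapse to $\left(\tfrac{(-1)^\ell\prod_{\delta\mid N}\delta^{r_\delta}}{d}\right)_L=\chi(d)$.

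The main obstacle is exactly this last computation: carrying out the Dedekind-sum and quadratic-reciprocity bookkeeping uniformly — distinguishing the two cases according to the parity of $c$, keeping track of the $\gcd$'s among the various $\delta$ and $c/\delta$, and controlling the sign flips in reciprocity — and verifying that the two congruences modulo $24$, together with $\ell\in\mathbb{Z}$, are exactly the conditions needed to annihilate all spurious roots of unity while leaving precisely the asserted symbol. Everything else — the reduction $\eta(\delta\gamma z)\leftrightarrow\nu_\eta(\gamma_\delta)$ and the handling of $(cz+d)^{1/2}$ — is routine, and no analytic input beyond the classical $\eta$-transformation is required, since the statement asserts only the functional equation and not holomorphy or any vanishing condition at the cusps.
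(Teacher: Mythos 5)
The paper does not prove this statement at all: Theorem \ref{thm_ono1} is quoted verbatim from \cite[Theorem 1.64]{ono2004}, where it is in turn attributed to the classical work of Gordon--Hughes and Newman (equivalently, Ligozat's criterion), so there is no in-paper argument to compare against. Your outline follows exactly that classical route, and its structural steps are sound: the reduction to $c>0$ and the remark on $-I$, the observation that for $\delta\mid N\mid c$ the matrix $\gamma_\delta=\begin{bmatrix} a & \delta b\\ c/\delta & d\end{bmatrix}$ lies in $\mathrm{SL}_2(\mathbb{Z})$ with $\gamma_\delta(\delta z)=\delta(\gamma z)$ and $(c/\delta)(\delta z)+d=cz+d$, the consistent use of one branch of $(cz+d)^{1/2}$ so that the half-integral powers assemble into $(cz+d)^{\ell}$ because $\sum_\delta r_\delta=2\ell$, and the resulting reduction of the theorem to the single multiplier identity $\prod_{\delta\mid N}\nu_\eta(\gamma_\delta)^{r_\delta}=\chi(d)$.

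The caveat is that this multiplier identity is the entire content of the theorem, and in your write-up it is asserted rather than established: you describe in general terms that Dedekind reciprocity plus quadratic reciprocity should turn the exponent into integer multiples of $\tfrac{1}{24}\sum_\delta \delta r_\delta$ and $\tfrac{1}{24}\sum_\delta \tfrac{N}{\delta}r_\delta$ and collapse the residue symbols to $\chi(d)$, but you do not carry out the bookkeeping. To complete it one typically replaces the Dedekind-sum form of $\nu_\eta$ by the Petersson closed forms (split according to the parity of $c$, and using the Kronecker rather than Legendre symbol so that even or negative $d$ are covered); then the $bd\sum_\delta \delta r_\delta$ term is killed by the first congruence, the terms involving $c/\delta$ are killed by the second since $N\mid c$, and reciprocity assembles the symbols $\left(\frac{c/\delta}{d}\right)$ into $\left(\frac{(-1)^{\ell}\prod_\delta \delta^{r_\delta}}{d}\right)$. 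So your proposal is a correct plan along the standard lines, with the decisive computation deferred to exactly the sources the paper cites; as written it is an outline of the known proof rather than a self-contained one, but no step in it is wrong.
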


\noindent If the eta-quotient $f(z)$ satisfies the conditions of Theorem \ref{thm_ono1} and  is holomorphic at all of the cusps of $\Gamma_0(N)$, then $f\in M_{\ell}(\Gamma_0(N), \chi)$.

\begin{definition}
Let $m$ be a positive integer and $f(z) = \sum_{n \geq 0} a(n)q^n \in M_{k}(\Gamma_0(N),\chi)$, where $\chi$ is a Dirichlet character modulo $N$. Then the action of Hecke operator $T_m$ on $f(z)$ is defined by 
\begin{align*}
f(z)|T_m := \sum_{n \geq 0} \left(\sum_{d\mid \gcd(n,m)}\chi(d)d^{k-1}a\left(\frac{nm}{d^2}\right)\right)q^n.
\end{align*}
In particular, if $m=p$ is prime, we have 
\begin{align}\label{hecke1}
f(z)|T_p := \sum_{n \geq 0} \left(a(pn)+\chi(p)p^{k-1}a\left(\frac{n}{p}\right)\right)q^n.
\end{align}
We note that $a(n)=0$ unless $n$ is a nonnegative integer.
\end{definition}
\begin{definition}\label{hecke2}
	A modular form $f(z)=\sum_{n \geq 0}a(n)q^n \in M_{k}(\Gamma_0(N),\chi)$ is called a Hecke eigenform if for every $m\geq2$ there exists a complex number $\lambda(m)$ for which 
	\begin{align}\label{hecke3}
	f(z)|T_m = \lambda(m)f(z).
	\end{align}
\end{definition}

In the special case when $\chi$ is the trivial character (i.e., $\chi(n) = 1$ for all $(n,N) = 1$), the Hecke operator simplifies to
\begin{align*}
f(z)|T_m := \sum_{n \geq 0} \left(\sum_{d\mid \gcd(n,m)}d^{k-1}a\left(\frac{nm}{d^2}\right)\right)q^n.
\end{align*}

\section{Proofs of Theorems \ref{elthm}, \ref{thm3n}, and \ref{thm:nathsel}}\label{sec-elmthm}

\begin{proof}[Proof of Theorem \ref{elthm}]
We have
\[
\sum_{n\geq 0}\overline{R^\ast_3}(n)q^n=\frac{f_2f_3}{f_1^2}.
\]
Using \eqref{HirSel3diss} in the above and extracting the terms involving $q^{3n+1}$, we find
\begin{equation}\label{mun3n1}
    \sum_{n\geq 0}\overline{R^\ast_3}(3n+1)q^n=2\frac{f_2^3f_3^3}{f_1^6}.
\end{equation}
Applying \eqref{HirSel3diss} on \eqref{mun3n1}, we obtain
\begin{equation}\label{nn1}
    \sum_{n\geq 0}\overline{R^\ast_3}(3n+1)q^n=2f_3^3\left(\frac{f_6^4f_9^6}{f_3^8f_{18}^3}+2q\frac{f_6^3f_9^3}{f_3^7}+4q^2\frac{f_6^2f_{18}^3}{f_3^6}\right)^3.
\end{equation}
Now, extracting the terms involving $q^{3n+1}$ from \eqref{nn1}, we have
\begin{equation*}
  \sum_{n\geq 0}\overline{R^\ast_3}(9n+4)q^n= 12\left( \dfrac{f_2^{11}f_3^{15}}{f_1^{20}f_6^6}+16q\dfrac{f_2^8f_3^6f_6^3}{f_1^{17}} \right).
\end{equation*}
This proves \eqref{amun-1}. Again, extracting the terms involving $q^{3n+2}$ from \eqref{nn1}, we have
\begin{align*}
  \sum_{n\geq 0}\overline{R^\ast_3}(9n+7)q^n&=2f_1^3\left(24\frac{f_2^{10}f_3^{12}}{f_1^{22}f_6^3}+96q\frac{f_2^7f_3^3f_6^6}{f_1^{19}}\right)\\
  &= 48\left(\frac{f_2^{10}f_3^{12}}{f_6^3f_1^{19}}+4q\frac{f_2^7f_3^3f_6^6}{f_1^{16}}\right).
\end{align*}
This proves \eqref{amun-2}.

Shifting to the case $\ell=6$, we have
\[
\sum_{n\geq 0}\overline{R^\ast_6}(n)q^n=\frac{f_6f_2}{f_1^2}.
\]
Using \eqref{HirSel3diss} in the above and extracting the terms involving $q^{3n+2}$ we obtain
\begin{equation}\label{mun6n}
   \sum_{n\geq 0}\overline{R^\ast_6}(3n+2)q^n =4\frac{f_2^3f_6^3}{f_1^6}.
\end{equation}
Comparing \eqref{mun6n} with \eqref{mun3n1} we see that
\[
\sum_{n\geq 0}\overline{R^\ast_6}(3n+2)q^n =2\frac{f_6^3}{f_3^3}\sum_{n\geq 0}\overline{R^\ast_3}(3n+1)q^n.
\]
This means that applying a process similar to the process we applied to \eqref{mun3n1}, we can prove \eqref{amun-4} (since $f_6^3/f_3^3$ is a function of $q^3$ ). For the sake of brevity, we leave the relevant details to the interested reader.

On the other hand, from \eqref{mun6n} and \eqref{HirSel3diss} we have
\begin{equation}\label{nn2}
\sum_{n\geq 0}\overline{R^\ast_6}(3n+2)q^n =4f_6^3\left(\frac{f_6^4f_9^6}{f_3^8f_{18}^3}+2q\frac{f_6^3f_9^3}{f_3^7}+4q^2\frac{f_6^2f_{18}^3}{f_3^6}\right)^3.
\end{equation}
Extracting the terms involving $q^{3n+1}$ from \eqref{nn2}, we have
\begin{align*}
    \sum_{n\geq 0}\overline{R^\ast_6}(9n+5)q^n &=4f_2^3\left(6\frac{f_2^{11}f_3^{15}}{f_1^{23}f_6^6}+96q\frac{f_2^8f_3^6f_6^3}{f_1^{20}}\right)\\
    &=24 \left(\frac{f_2^{14}f_3^{15}}{f_1^{23}f_6^6}+16q\frac{f_2^{11}f_3^6f_6^3}{f_1^{20}}\right).
\end{align*}
This proves \eqref{amun-3}. Again, extracting the terms involving $q^{3n+2}$ from \eqref{nn2}, we have
\begin{align*}
    \sum_{n\geq 0}\overline{R^\ast_6}(9n+8)q^n &= 96\left( \dfrac{f_2^{13}f_3^{12}}{f_1^{22}f_6^3}+4q\dfrac{f_2^{10}f_3^3f_6^6}{f_1^{19}} \right).
\end{align*}
This proves \eqref{amun-4}.
\end{proof}

\begin{proof}[Proof of Theorem \ref{thm3n}]
The proof is immediate if we apply \eqref{HirSel3diss} to the following
\[
\sum_{n\geq 0}\overline{R^\ast_{3k}}(n)q^n=\frac{f_2f_{3k}}{f_1^2},
\]
and then extract the terms involving $q^{3n+1}$ and $q^{3n+2}$.
\end{proof}

\begin{proof}[Proof of Theorem \ref{thm:nathsel}]
    Since the proofs are all very similar, we only give a complete proof of five congruences, and mention the details for the other congruences.

Employing \eqref{1/f1^4} to \eqref{e 2n+1}, and then extracting the terms of the form $q^{2n}$, we obtain
\begin{equation}\label{e 4n+1}
    \sum_{n\geq 0}\overline{R_8^\ast}(4n+1)q^n=2\frac{f_2^{14}}{f_1^{12}f_4^2}.
\end{equation}
Employing \eqref{1/f1^4} to \eqref{e 2n+1}, and then extracting the terms of the form $q^{2n+1}$, we obtain
\begin{equation}\label{e 4n+3}
    \sum_{n\geq 0}\overline{R_8^\ast}(4n+3)q^n=8\frac{f_2^2f_4^6}{f_1^8}.
\end{equation}
Employing \eqref{1/f1^4} to \eqref{e 4n+1}, and then extracting the terms of the form $q^{2n}$, we obtain
\begin{equation}\label{e 8n+1}
    \sum_{n\geq 0}\overline{R_8^\ast}(8n+1)q^n=2\left(\frac{f_2^{40}}{f_1^{28}f_4^{12}}+48q\frac{f_4^4f_2^{16}}{f_1^{20}}\right).
\end{equation}
Employing \eqref{1/f1^4} to \eqref{e 4n+3}, and then extracting the terms of the form $q^{2n}$, we obtain
\begin{equation}\label{e 8n+3}
    \sum_{n\geq 0}\overline{R_8^\ast}(8n+3)q^n=8\left(\frac{f_2^{34}}{f_1^{26}f_4^8}+16q\frac{f_4^8f_2^{10}}{f_1^{18}}\right).
\end{equation}
Employing \eqref{1/f1^4} to \eqref{e 4n+1}, and then extracting the terms of the form $q^{2n+1}$, we obtain
\begin{equation}\label{e 8n+5}
    \sum_{n\geq 0}\overline{R_8^\ast}(8n+5)q^n=8\left(3\frac{f_2^{28}}{f_1^{24}f_4^4}+16q\frac{f_4^{12}f_2^4}{f_1^{16}}\right).
\end{equation}
Employing \eqref{1/f1^4} \textcolor{blue}{to \eqref{e 4n+3}}, and then extracting the terms of the form $q^{2n+1}$, we obtain
\begin{equation}\label{e 8n+7}
    \sum_{n\geq 0}\overline{R_8^\ast}(8n+7)q^n=64\frac{f_2^{22}}{f_1^{22}}.
\end{equation}
Employing \eqref{1/f1^4} to \eqref{e 8n+5}, and then extracting the terms of the form $q^{2n}$, we obtain
\begin{equation}\label{e 16n+5}
    \sum_{n\geq 0}\overline{R_8^\ast}(16n+5)q^n=8\left(3\frac{f_2^{80}}{f_1^{56}f_4^{24}}+976q\frac{f_2^{56}}{f_1^{48}f_4^8}+15616q^2\frac{f_4^8f_2^{32}}{f_1^{40}}+12288q^3\frac{f_4^{24}f_2^8}{f_1^{32}}\right).
\end{equation}
Employing \eqref{1/f1^4} to \eqref{e 8n+1}, and then extracting the terms of the form $q^{2n+1}$, we obtain
\begin{equation}\label{e 16n+9}
    \sum_{n\geq 0}\overline{R_8^\ast}(16n+9)q^n=8\left(19\frac{f_2^{74}}{f_1^{54}f_4^{20}}+2480q\frac{f_2^{50}}{f_1^{46}f_4^4}+20736q^2\frac{f_4^{12}f_2^{26}}{f_1^{38}}+4096q^3\frac{f_4^{28}f_2^2}{f_1^{30}}\right).
\end{equation}
This proves \eqref{16n9}.

Employing \eqref{1/f1^2} to \eqref{e 8n+3}, and then extracting the terms of the form $q^{2n+1}$, we obtain
\begin{align}\label{e 16n+11}
    \sum_{n\geq 0}\overline{R_8^\ast}(16n+11)q^n&=16\left(8\frac{f_2^8f_4^{45}}{f_1^{35}f_8^{18}}+13\frac{f_4^{59}}{f_1^{31}f_2^6f_8^{22}}+1144q\frac{f_4^{47}}{f_1^{31}f_2^2f_8^{14}}+1152q\frac{f_2^{12}f_4^{33}}{f_1^{35}f_8^{10}} \right. \nonumber \\ 
&\quad +16128q^2\frac{f_2^{16}f_4^{21}}{f_1^{35}f_8^2}+20592q^2\frac{f_2^2f_4^{35}}{f_1^{31}f_8^6}+43008q^3\frac{f_2^{20}f_8^6f_4^9}{f_1^{35}} \nonumber  \\
&\quad +109824q^3\frac{f_2^6f_8^2f_4^{23}}{f_1^{31}}+18432q^4\frac{f_2^{24}f_8^{14}}{f_1^{35}f_4^3}+ 183040q^4\frac{f_2^{10}f_8^{10}f_4^{11}}{f_1^{31}} \nonumber  \\
&\quad \left. +79872q^5\frac{f_2^{14}f_8^{18}}{f_1^{31}f_4}+4096q^6\frac{f_2^{18}f_8^{26}}{f_1^{31}f_4^{13}}\right).
\end{align}
This proves \eqref{16n11}.

Employing \eqref{1/f1^4} to \eqref{e 8n+5}, and then extracting the terms of the form $q^{2n+1}$, we obtain
\begin{equation}\label{e 16n+13}
    \sum_{n\geq 0}\overline{R_8^\ast}(16n+13)q^n=64\left(11\frac{f_2^{68}}{f_1^{52}f_4^{16}}+672q\frac{f_2^{44}}{f_1^{44}}+2816q^2\frac{f_4^{16}f_2^{20}}{f_1^{36}}\right).
\end{equation}
This proves \eqref{16n13}.

Employing \eqref{1/f1^2} to \eqref{e 8n+7}, and then extracting the terms of the form $q^{2n+1}$, we obtain
\begin{align}\label{e 16n+15}
\sum_{n\geq 0}\overline{R_8^\ast}(16n+15)q^n&=128\left(11\frac{f_2^2f_4^{49}}{f_1^{33}f_8^{18}}+660q\frac{f_2^6f_4^{37}}{f_1^{33}f_8^{10}}+7392q^2\frac{f_2^{10}f_4^{25}}{f_1^{33}f_8^2}+21120q^3\frac{f_2^{14}f_8^6f_4^{13}}{f_1^{33}}  \right. \nonumber  \\
&\quad \left.+14080q^4\frac{f_2^{18}f_8^{14}f_4}{f_1^{33}}+1024q^5\frac{f_2^{22}f_8^{22}}{f_1^{33}f_4^{11}}  \right).
\end{align}
This proves \eqref{16n15}.

Employing \eqref{1/f1^4} to \eqref{e 16n+5}, and then extracting the terms of the form $q^{2n+1}$, we obtain
\begin{align}\label{e 32n+21}
    \sum_{n\geq 0}\overline{R_8^\ast}(32n+21)q^n&=64\left(143\frac{f_2^{160}}{f_1^{112}f_4^{48}}+217184q\frac{f_2^{136}}{f_1^{104}f_4^{32}}+31908096q^2\frac{f_2^{112}}{f_1^{96}f_4^{16}} \right. \nonumber \\ 
&\quad +1014054912q^3\frac{f_2^{88}}{f_1^{88}}+8168472576 q^4\frac{f_4^{16}f_2^{64}}{f_1^{80}}+14233370624q^5\frac{f_4^{32}f_2^{40}}{f_1^{72}}\nonumber  \\
&\quad \left. +2399141888q^6\frac{f_4^{48}f_2^{16}}{f_1^{64}}\right).
\end{align}
This proves \eqref{32n21}.

Employing \eqref{1/f1^4} to \eqref{e 16n+13}, and then extracting the terms of the form $q^{2n+1}$, we obtain
\begin{align}\label{e 32n+29}
    \sum_{n\geq 0}\overline{R_8^\ast}(32n+29)q^n&=256\left(311\frac{f_2^{154}}{f_1^{110}f_4^{44}}+223520q\frac{f_2^{130}}{f_1^{102}f_4^{28}}+21601536q^2\frac{f_2^{106}}{f_1^{94}f_4^{12}} \right. \nonumber \\ 
&\quad +486064128q^3\frac{f_4^4f_2^{82}}{f_1^{86}}+2747334656q^4\frac{f_4^{20}f_2^{58}}{f_1^{78}}+3021996032q^5\frac{f_4^{36}f_2^{34}}{f_1^{70}} \nonumber  \\
&\quad \left. +184549376q^6\frac{f_4^{52}f_2^{10}}{f_1^{62}}\right).
\end{align}
This proves \eqref{32n29}.

For the rest of the congruences, we skip the proof but mention the relevant steps below:
\begin{itemize}
    \item To obtain \eqref{32n25}, we employ \eqref{1/f1^2} in \eqref{e 16n+9},
    \item To obtain \eqref{64n53}, we employ \eqref{1/f1^4} in \eqref{e 32n+21},
    \item To obtain \eqref{64n37}, we first employ \eqref{1/f1^4} in \eqref{e 16n+5}, and \textcolor{blue}{then} employ \eqref{1/f1^4} in the resulting equation,
    \item To obtain \eqref{128n85}, we first employ \eqref{1/f1^4} in \eqref{e 32n+21}, and \textcolor{blue}{then} employ \eqref{1/f1^4} in the resulting equation,
    \item To obtain \eqref{128n117}, we first employ \eqref{1/f1^4} in \eqref{e 32n+21}, and \textcolor{blue}{then} employ \eqref{1/f1^4} in the resulting equation.
\end{itemize}
\end{proof}

\section{Proof of Theorem \ref{saik}}\label{sec-elm2}

\begin{proof}[Proof of Theorem \ref{saik}]
We recall
\[\varphi(q):=1+2\sum_{n\geq 0}q^{n^2}=\varphi(q^4) + 2q \psi (q^8),\] with $\psi(q):=\sum_{n\geq 0}q^{n(n+1)/2}$, as well as the following properties (see \cite[(1.5.8)]{power} and \cite[(1.5.16)]{power} respectively):
\begin{equation}\label{phi-1}
    \varphi(-q)=\frac{f_1^2}{f_2},
\end{equation}
and
\begin{equation}\label{phi-2}
    \frac{1}{\varphi(-q)}=\prod_{i\geq 0}\varphi(q^{2^i})^{2^i}.
\end{equation}
Now, using \eqref{phi-1} and \eqref{phi-2} we rewrite
\[
\sum_{n\geq 0}\overline{R_{2^k}^\ast}(n)q^n=f_{2^k}\prod_{i\geq 0}\varphi(q^{2^i})^{2^i}.
\]
Since $f_{2^k}\prod_{i\geq 3}\varphi(q^{2^i})^{2^i}$ is a function of $q^8$ for all $k\geq 3$, to prove our result we can ignore that part and perform a $8$-dissection of the remaining part.

As the highest modulus involved in the theorem is $16$ and the other moduli are divisors of $16$, we will prove our result if we consider this $8$-dissection modulo $16$. Rewriting
\[
\sum_{n\geq 0}\overline{R^\ast_{2^k}}(n)q^n=\left(\sum_{j=0}^{7}a_{t,j}q^{j}F_{t,j}(q^8)\right) \left(\prod_{i\geq3}\varphi(q^{2^i})\right)^{2^{i}},
\]
where $F_{t,j}(q^8)$ is a function of $q^8$ whose power series representation has integer coefficients. It suffices to just prove the congruences
\begin{align*}
    a_{t,1} &\equiv 0 \pmod{2},\\
    a_{t,2} &\equiv 0 \pmod{2},\\
    a_{t,3} &\equiv 0 \pmod{4},\\
    a_{t,4} &\equiv 0 \pmod{2},\\
    a_{t,5} &\equiv 0 \pmod{8},\\
    a_{t,6} &\equiv 0 \pmod{4},\\
    a_{t,7} &\equiv 0 \pmod{16}.
\end{align*}
However, this already follows from a result of Saikia and Sarma \cite[Theorem 1.9]{SaikiaSarma}, who follow the same pattern as the proofs of \cite[Theorem 2.2]{selbol} and \cite[Lemma 6.1]{SSS}, so we omit the details here. The interested reader can refer to the above-mentioned papers to fill in the details.
\end{proof}

\section{Proofs of Theorems \ref{thmjames1}, \ref{thmjames2}, and \ref{thmjames3}}\label{sec-elm3}

From \eqref{e1.010101}, we see that 
\begin{equation*}
        \sum_{n\geq 0}\overline{R_6^\ast}(3n+2)q^n \equiv 4f_6^3 \pmod{8}. 
    \end{equation*}
So from \eqref{e2.0.3.3}, we then know that $\overline{R_6^\ast}(3n+2) \equiv 0 \pmod{8}$ if $n$ cannot be written in the form $6k(k+1)/2$ or $3k(k+1)$ for any $k$.

 We note that $2n+1$ cannot be written as $3k(k+1)$ because $3k(k+1)$ is even for any $k$ and $2n+1$ is always odd. This proves Theorem \ref{thmjames1}. Again, we note that $3n+1$ and $3n+2$ cannot be written as $3k(k+1)$ because $3n+1$ and $3n+2$ are not divisible by 3 but $3k(k+1)$ is. This proves Theorem \ref{thmjames2}.

\begin{proof}[Proof of Theorem \ref{thmjames3}]
We ask whether $pn+r$ is expressible as $3k(k+1)$.  If it is, then this would imply that $r\equiv 3k(k+1) \pmod{p}$ which implies that 
$\text{inv}(3,p)\cdot r \equiv k(k+1) \pmod{p}$ which then implies that $\text{inv}(3,p)\cdot 4r +1 \equiv (2k+1)^2 \pmod{p}$.  But this cannot happen because of the assumption that $\text{inv}(3,p)\cdot 4r +1$ is a quadratic nonresidue modulo $p$.  This completes the proof.  
\end{proof}

\section{Proof of Theorem \ref{thm1.00}}\label{sec:mf1}

We now begin the proof of Theorem \ref{thm1.00}.

\begin{proof}[Proof of Theorem \ref{thm1.00}]
Rewriting \eqref{z1} in terms of eta quotients, we get via Lemma \ref{lm}
\begin{align*}
\sum_{n \geq 0}\overline{R_6^\ast}(18n+2)q^{8n+1} \equiv 4\eta(4z)^6 \pmod{8}.
\end{align*}	
Let $\eta(4z)^6 = \sum_{n \geq 0} a(n)q^n.$ Then $a(n) = 0$ if $n\not\equiv 1\pmod{8}$ and for all $n\geq 0$, 
\begin{align}\label{1_0}
\overline{R_6^\ast}(18n+2) \equiv a(8n+1) \pmod 8.
\end{align}	
By Theorem \ref{thm_ono1}, we have $\eta(4z)^6 \in S_3(\Gamma_0(16))$. Since $\eta(4z)^6$ is a Hecke eigenform (see, for example \cite{Martin}), \eqref{hecke1} and \eqref{hecke3} yield
\begin{align*}
	\eta(4z)^6|T_p = \sum_{n \geq 1} \left(a(pn) + \chi(p)p^2 a\left(\frac{n}{p}\right) \right)q^n = \lambda(p) \sum_{n \geq 1} a(n)q^n,
\end{align*}
which implies 
\begin{align}\label{1.12}
	a(pn) + \chi(p)p^2 a\left(\frac{n}{p}\right) = \lambda(p)a(n).
\end{align}
Putting $n=1$ and noting that $a(1)=1$, we readily obtain $a(p) = \lambda(p)$.
Since $a(p)=0$ for all $p \not\equiv 1 \pmod{8}$, we have $\lambda(p) = 0$.
From \eqref{1.12}, we obtain 
\begin{align}\label{new-1.0}
a(pn) + \chi(p)p^2 a\left(\frac{n}{p}\right) = 0.
\end{align}
From \eqref{new-1.0}, we derive that for all $n \geq 0$ and $p\nmid r$, 
\begin{align}\label{1.101}
a(p^2n + pr) = 0
\end{align}
and  
\begin{align}\label{1.310}
a(p^2n) = - \chi(p)p^2 a(n)\equiv a(n) \pmod{2}.
\end{align}
Substituting $n$ by $8n-pr+1$ in \eqref{1.101} and together with \eqref{1_0}, we find that
\begin{align}\label{1.410}
\overline{R_6^\ast}\left(18p^2n + \frac{9p^2-1}{4}+ 9pr\frac{1-p^2}{4}\right) \equiv 0 \pmod 8.
\end{align}	
Substituting $n$ by $8n+1$ in \eqref{1.310} and using \eqref{1_0}, we obtain
\begin{align}\label{1.510}
\overline{R_6^\ast}\left(18p^2n + \frac{9p^2-1}{4}\right) \equiv \overline{R_6^\ast}(18n+2) \pmod 8.
\end{align}
Since $p \geq 3$ is prime, so $4\mid (1-p^2)$ and $\gcd \left(\frac{1-p^2}{4} , p\right) = 1$.  
Hence when $r$ runs over a residue system excluding the multiple of $p$, so does $\frac{1-p^2}{4}r$. Thus \eqref{1.410} can be rewritten as
\begin{align}\label{1.610}
\overline{R_6^\ast}\left(18p^2n + \frac{9p^2-1}{4}+ 9pj\right) \equiv 0 \pmod 8,
\end{align}
where $p \nmid j$. 
\par Now, $p_i \geq 3$ are primes such that $p_i \not\equiv 1 \pmod 8$. Since 
\begin{align*}
18p_1^2\dots p_{k}^2n + \frac{9p_1^2\dots p_{k}^2-1}{4}=18p_1^2\left(p_2^2\dots p_{k}^2n + \frac{p_2^2\dots p_{k}^2-1}{8}\right)+\frac{9p_1^2-1}{4},
\end{align*}
using \eqref{1.510} repeatedly we obtain that
\begin{align*}
\overline{R_6^\ast}\left(18p_1^2\dots p_{k}^2n + \frac{9p_1^2\dots p_{k}^2-1}{4}\right) \equiv \overline{R_6^\ast}(18n+2) \pmod{8},
\end{align*}
which implies
\begin{equation}\label{1.710}
\overline{R_6^\ast}\left( p_1^2 \cdots p_k^2 n + \frac{p_1^2\cdots p_k^2 -1}{4} \right) \equiv \overline{R_6^\ast}(n) \pmod{8}.
\end{equation}
Let $j\not\equiv 0\pmod{p_{k+1}}$. Then \eqref{1.610} and \eqref{1.710} yield
\begin{align*}
\overline{R_6^\ast}\left(18p_1^2\dots p_{k+1}^2n + \frac{9p_1^2\dots p_{k}^2p_{k+1}(4j+p_{k+1})-1}{4}\right) \equiv 0 \pmod{8}.
\end{align*}
This completes the proof of the theorem.
\end{proof}

\section{Proof of Theorem \ref{t4}}\label{sec:mf2}

\begin{proof}
Magnifying $q \to q^8$ and multiplying $q$ on both sides of $\eqref{z1}$, we have
\begin{equation}\label{e4}
    \sum_{n \geq 0}\overline{R_6^\ast}(18n+2)q^{8n+1} \equiv 4qf_8^3 = 4\Delta(z) = 4\sum_{n \geq 1}\tau(n)q^n \pmod{8}.
\end{equation}
Hence, from $\eqref{e2.0.3.3}$, we have
\begin{equation*}
    \Delta(z)=qf_1^{24} \equiv qf_8^3 \equiv \sum_{n \geq 0}q^{(2n+1)^2} \pmod{2}.
\end{equation*}
Therefore, we have
\begin{equation}\label{e5}
    \sum_{n \geq 0}\overline{R_6^\ast}(18n+2)q^{8n+1}\equiv 4\sum_{n \geq 0}q^{(2n+1)^2} \pmod{8}.
\end{equation}
If $s\equiv1 \pmod{8}$ and $\left(\dfrac{s}{p}\right) = -1$, then, for any $n\geq 0$, $8np+s$ cannot be an odd square. This implies that the coefficients of $q^{8np+s}$ in the left-hand side of $\eqref{e5}$ must be even. It follows that
\begin{equation}\label{e6}
    \overline{R_6^\ast}\left( 18pn + \dfrac{9s-1}{4} \right) \equiv 0 \pmod{8}.
\end{equation}
This proves \eqref{e12.0.1}.

Since $\tau(p) \equiv 0 \pmod{2}$ and $\Delta(n)$ is a Hecke eigenform, we have
\begin{equation*}
    \Delta(z)\mid T_{p} = \tau(p)\Delta(z) \equiv 0 \pmod{2}.
\end{equation*}
By $\eqref{hecke1}$ and $\eqref{e4}$, we get
\begin{equation*}
    \sum_{n \geq 0}\overline{R_6^\ast}(18n+2) q^{8n+1} \mid T_{p} \equiv \sum_{n \geq n_0}\left( \overline{R_6^\ast}\left( \dfrac{9pn-1}{4} \right) + \overline{R_6^\ast}\left( \dfrac{9n/p-1}{4}\right) \right)q^n \equiv 0 \pmod{8}.
\end{equation*}
If we write $m=\dfrac{9n/p-1}{4}\in N $, then $\dfrac{9pn-1}{4} = p^2m + \dfrac{p^2-1}{4}$. So, we have
\begin{equation*}
    \overline{R_6^\ast}\left( p^2m + \dfrac{p^2-1}{4} \right) + \overline{R_6^\ast}(m) \equiv 0 \pmod{8}.
\end{equation*}
That is, 
\begin{equation*}
   \overline{R_6^\ast}\left( p^2m + \dfrac{p^2-1}{8} \right) \equiv \overline{R_6^\ast}(m) \pmod{8}.
\end{equation*}
By induction, for $k\geq 1$ we find that
\begin{equation*}
    \overline{R_6^\ast}\left( p^{2k}m + \dfrac{p^{2k}-1}{4} \right) \equiv \overline{R_6^\ast}(m) \pmod{8}.
\end{equation*}
Considering $\eqref{e6}$, we get
\begin{equation*}
   \overline{R_6^\ast}\left( p^{2k+1}n+\dfrac{sp^{2k}-1}{4} \right) \equiv \overline{R_6^\ast}\left( pn + \dfrac{s-1}{4} \right) \equiv 0 \pmod{8}.
\end{equation*}
This proves \eqref{e12.0.2}.
\end{proof}

\section{Concluding Remarks}\label{sec:concl}

\begin{enumerate}
    \item Congruences \eqref{mun-5} and \eqref{mun-6} are still in need of elementary proofs.
    \item The interested reader can think of extending the list of congruences proved in Theorem \ref{elthm} using other dissection formulas available in the literature. Some other congruences have been studied recently by N. Saikia and Paksok \cite{SaikiaPaksok}.
\item Can we generalize Theorem \ref{thm:nathsel} to find an infinite family of congruences?
\item From the examples in Theorem \ref{t4}, we observe that the congruence modulo $8$ appears to extend to modulo $128$. We state this as a conjecture below.

\begin{conjecture}
    For all $n \geq 0$, we have
    \begin{equation*}
        \overline{R_6^\ast}\left(54n + 38\right) \equiv 0 \pmod{128}.
    \end{equation*}
\end{conjecture}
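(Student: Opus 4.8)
The plan is to refine Lemma~\ref{lm} to higher $2$-adic accuracy and then to extract the progression $54n+38$. First I would record the exact form of \eqref{e1.010101}: since $f_6^3$ is a power series in $q^6$ and $54n+38=3\bigl(6(3n+2)\bigr)+2$, pulling $f_6^3$ out of $\tfrac{f_2^3 f_6^3}{f_1^6}$ and extracting the terms $q^{6n}$ gives
\begin{equation}\label{conj-red1}
\sum_{n\ge 0}\overline{R_6^\ast}(18n+2)q^n \;=\; 4\,f_1^3\,H(q),\qquad H(q):=\sum_{n\ge 0}\Bigl[q^{6n}\Bigr]\!\Bigl(\tfrac{f_2^3}{f_1^6}\Bigr)q^n,
\end{equation}
where $H$ has nonnegative integer coefficients; since $\tfrac{f_2^3}{f_1^6}\equiv 1\pmod 2$ one has $H\equiv 1\pmod 2$, which recovers Lemma~\ref{lm}. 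By \eqref{e2.0.3.4} the triangular numbers $k(k+1)/2$ never lie in the residue class $2\pmod 3$, so the $3$-dissection of $f_1^3$ has no $q^{3n+2}$-part; writing $f_1^3=A_0(q^3)+qA_1(q^3)$ and $H=H_0(q^3)+qH_1(q^3)+q^2H_2(q^3)$ and collecting the $q^{3n+2}$-terms of \eqref{conj-red1} yields
\begin{equation}\label{conj-red2}
\sum_{n\ge 0}\overline{R_6^\ast}(54n+38)q^n \;=\; 4\bigl(A_0(q)H_2(q)+A_1(q)H_1(q)\bigr).
\end{equation}
Thus the conjecture is equivalent to $A_0H_2+A_1H_1\equiv 0\pmod{32}$. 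Since $H\equiv 1\pmod 2$ only gives $H_1\equiv H_2\equiv 0\pmod 2$, hence the known congruence modulo $8$, the whole point is to gain the remaining factor $16$.

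For this I would compute $H_1$ and $H_2$ to high $2$-adic precision. Combining \eqref{phi-1} and \eqref{phi-2} gives the clean factorisation $\tfrac{f_2^3}{f_1^6}=\tfrac{1}{\varphi(-q)^3}=\tfrac{f_4^6}{f_2^{12}}\,\varphi(q)^3$, in which $\tfrac{f_4^6}{f_2^{12}}$ is a power series in $q^2$ and, writing $S:=\tfrac12\bigl(\varphi(q)-1\bigr)=\sum_{m\ge1}q^{m^2}$,
\begin{equation*}
\varphi(q)^3=(1+2S)^3=1+6S+12S^2+8S^3 .
\end{equation*}
Reading off the coefficients of $q^{18n+6}$ and $q^{18n+12}$ then writes $H_1$ and $H_2$ as finite $\mathbb Z$-linear combinations, with the explicit coefficients $1,6,12,8$, of coefficients of $\tfrac{f_4^6}{f_2^{12}}$ against restricted sums of one, two, and three squares; applying the same factorisation to $\tfrac{f_4^6}{f_2^{12}}=\bigl(\tfrac{f_4^{12}}{f_2^{24}}\varphi(q)^6\bigr)\big|_{q\mapsto q^2}$ peels off the next layer. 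The aim of this bookkeeping is to show that every coefficient landing in the class $54n+38$ carries the full $2^5$ beyond the explicit factor $4$. An equivalent, more mechanical route is to iterate the $2$-dissections \eqref{1/f1^2}, \eqref{1/f1^4} together with the $3$-dissection \eqref{HirSel3diss} directly on $\tfrac{f_2^3}{f_1^6}$, tracking the powers of $2$ carried by the ``offset'' branches (the $2q$- and $4q$-terms) and verifying that reaching an exponent $\equiv 18n+12\pmod{54}$ forces one through enough of them.

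The main obstacle is exactly this $2$-adic bookkeeping: the naive dissections deliver only $2^3=8$, and the remaining $2^4$ is not visible at the surface. The cleanest rigorous finish is probably to pass to the theory of modular forms, in the spirit of the proofs of Theorems~\ref{thm1.00} and \ref{t4} but now working modulo $2^7$. After $q\mapsto q^8$ and multiplication by $q$, \eqref{conj-red1} reads $\sum_{n\ge0}\overline{R_6^\ast}(18n+2)q^{8n+1}=4\,\eta(8z)^3\,H(q^8)$, where $H$ is the image of the weakly holomorphic eta-quotient $\tfrac{\eta(2z)^3}{\eta(z)^6}=\tfrac{f_2^3}{f_1^6}$ under the operator $\sum a_nq^n\mapsto\sum a_{6n}q^n$; thus $4\,\eta(8z)^3H(q^8)$ is a weakly holomorphic modular form on some $\Gamma_0(N)$. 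Reduced modulo $128$ it should coincide with $4$ times an explicit holomorphic eta-quotient in a fixed space $M_k(\Gamma_0(N),\chi)$ (cf.\ Theorem~\ref{thm_ono1}), at which point the vanishing of the coefficients on $54n+38$ modulo $128$ reduces to checking finitely many Fourier coefficients up to the Sturm bound --- essentially what the \texttt{RaduRK} package automates (and which was already invoked in \cite{ala}). The delicate points are pinning down $N$, $k$ and the eta-quotient, and making the reduction modulo $2^7$ (which is not a field) rigorous, for instance by treating the successive powers of $2$ one at a time. Finally, I would note that the leap from modulus $8$ to $128$ is heuristically natural from \eqref{conj-red2}: the ``three squares'' structure of $\varphi(q)^3$ --- the number of representations of an integer $N\ge1$ as a sum of three squares is always even --- conspires with the coefficients $6,12,8$ to force substantial extra $2$-divisibility precisely on the class $54n+38$.
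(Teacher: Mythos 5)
First, note that the statement you were asked to prove is not proved in the paper at all: it appears only as a conjecture in the concluding remarks (Section \ref{sec:concl}), motivated by the numerical examples following Theorem \ref{t4}, so there is no proof of record to compare against. Your preliminary reductions are correct and would be a reasonable starting point: pulling $f_6^3$ out of \eqref{e1.010101} and extracting the $q^{6n}$ terms does give $\sum_{n\geq 0}\overline{R_6^\ast}(18n+2)q^n=4f_1^3H(q)$ with $H\equiv 1\pmod 2$ (recovering Lemma \ref{lm}); the absence of a $q^{3n+2}$ part in $f_1^3$ follows from \eqref{e2.0.3.3} because triangular numbers are never congruent to $2$ modulo $3$; and your reformulation of the conjecture as the statement that $A_0H_2+A_1H_1\equiv 0\pmod{32}$ is a correct, and possibly useful, reduction.

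However, what you have written is a plan rather than a proof, and the gap is precisely the step that matters: nothing in the proposal establishes the extra factor $2^4$ beyond the modulus $8$ that is already known (Theorem \ref{thmjames2} and Theorem \ref{t4}). The $\varphi(q)^3=1+6S+12S^2+8S^3$ bookkeeping is only set up, never carried out, and the closing heuristic (evenness of the number of representations as a sum of three squares, together with the coefficients $6,12,8$) accounts for at most one or two additional factors of $2$, not the four that are needed, and says nothing about why the gain should be concentrated on the class $54n+38$. The modular-forms finish is likewise only sketched: $H$ arises from the weakly holomorphic eta-quotient $f_2^3/f_1^6$ by extracting every sixth coefficient, and you do not identify the level, weight and character of a holomorphic form in some $M_k(\Gamma_0(N),\chi)$ congruent to $4\,\eta(8z)^3H(q^8)$ modulo $128$ (Theorem \ref{thm_ono1} gives transformation behaviour but not holomorphy at the cusps, which fails for this quotient and must be repaired before any Sturm-bound argument applies), nor do you carry out the finite coefficient check, nor address rigorously the fact that $128$ is not prime beyond saying one could work one power of $2$ at a time. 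As it stands, the argument proves nothing beyond the modulus $8$; to convert it into a proof you would need either to complete the explicit dissection (for example, compute the relevant piece of the $54$-dissection of $f_2^3f_6^3/f_1^6$ modulo $128$ in closed form) or to set up and actually execute the Radu--Kolberg/Sturm verification you allude to.
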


\begin{conjecture}
    For all $n \geq 0$ and $k \geq 1$, we have
    \begin{equation*}
        \overline{R_6^\ast}\left(18 \cdot 3^{2k+1} n + \dfrac{153 \cdot 3^{2k} - 1}{4}\right) \equiv 0 \pmod{128}.
    \end{equation*}
\end{conjecture}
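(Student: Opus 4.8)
Both conjectures assert that $\overline{R_6^\ast}(18m+2)\equiv 0 \pmod{128}$ for $m$ in explicit residue classes --- $m\equiv 2\pmod 3$ for the first, and $m\equiv \frac{17\cdot 3^{2k}-1}{8}\pmod{3^{2k+1}}$ for the second --- so the plan is to sharpen Lemma~\ref{lm} from modulo $8$ to modulo $128$ and then dissect $3$-adically. From \eqref{e1.010101} we have $\sum_{m\ge 0}\overline{R_6^\ast}(3m+2)q^m = 4\,f_6^3\,\varphi(-q)^{-3}$ with $\varphi(-q)^{-3}=f_2^3/f_1^6$; since $f_6^3$ is a function of $q^6$, extracting the terms $q^{6m}$ and replacing $q^6$ by $q$ yields the \emph{exact} identity
\[
\sum_{m\ge 0}\overline{R_6^\ast}(18m+2)\,q^m \;=\; 4\,f_1^3\,H(q),\qquad H(q):=\sum_{m\ge 0}\big([q^{6m}]\varphi(-q)^{-3}\big)\,q^m ,
\]
which already recovers Lemma~\ref{lm} because $H\equiv 1\pmod 2$. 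Using \eqref{phi-2}, $\varphi(-q)^{-3}=\prod_{i\ge 0}\varphi(q^{2^i})^{3\cdot 2^i}$; since $\varphi(q^{2^i})\equiv 1\pmod 2$, every factor with $i\ge 4$ is $\equiv 1\pmod{32}$, so $\varphi(-q)^{-3}\equiv \varphi(q)^3\varphi(q^2)^6\varphi(q^4)^{12}\varphi(q^8)^{24}\pmod{32}$ --- a finite product of theta-type series that can be written out explicitly from the standard identities for small powers of $\varphi$.

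The heart of the matter is the following structural statement, which I would establish by multiplying out the mod-$32$ product above, taking its $q^{6\mathbb{Z}}$-part, and checking that the quadratic-form constraints which force $6\mid Q$ on a surviving monomial $q^Q$ in fact force $18\mid Q$.

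\emph{Key Claim.} Modulo $32$, the series $H(q)$ is supported only on exponents divisible by $3$; equivalently $[q^m]\varphi(-q)^{-3}\equiv 0\pmod{32}$ whenever $6\mid m$ and $18\nmid m$.

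Granting the Key Claim, both conjectures drop out of $3$-adic dissections of $f_1^3H$ via the classical decomposition $f_1^3 = A(q^3) - 3q\,f_9^3$ (split \eqref{e2.0.3.3} according to $k\bmod 3$). For the first conjecture, any term $q^{3n+2}$ of $f_1^3H$ must pair an exponent of $f_1^3$ lying in $3\mathbb{Z}\cup(1+3\mathbb{Z})$ with an exponent of $H$ that is $\not\equiv 0\pmod 3$, and the Key Claim makes every such coefficient of $H$ divisible by $32$; hence $\overline{R_6^\ast}(54n+38)=4[q^{3n+2}](f_1^3H)\equiv 0\pmod{128}$. For the second I would prove an \emph{internal congruence} $\overline{R_6^\ast}(162n+20)\equiv u\,\overline{R_6^\ast}(18n+2)\pmod{128}$ for a fixed unit $u$: extracting the terms $q^{9n+1}$ from $4f_1^3H = 4A(q^3)H - 12q\,f_9^3H$, the $A(q^3)H$ part contributes $0\pmod{128}$ by the Key Claim, while $-12q\,f_9^3H$ rescales to $-12\,f_1^3H_3$ with $H_3(q):=\sum_n([q^{9n}]H)\,q^n$, and a precise description of $H$ and $H_3$ modulo $32$ pins down $u$. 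Since $u$ is invertible, vanishing modulo $128$ propagates under $m\mapsto 9m+1$; because $\frac{17\cdot 3^{2k+2}-1}{8}=9\cdot\frac{17\cdot 3^{2k}-1}{8}+1$, the second conjecture follows from the first by induction on $k$, paralleling the passage from \eqref{e12.0.1} to \eqref{e12.0.2} but with the $\Delta$-Hecke step of Theorem~\ref{t4} replaced by this elementary internal congruence.

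The main obstacle is the Key Claim modulo $32$: it is a finite but delicate verification, and a single surviving cross term $16\,q^Q$ with $6\mid Q$, $18\nmid Q$ would sink the whole approach. I would also point out why a softer argument is unlikely to suffice: the elementary modular-forms proofs of Theorems~\ref{thm1.00} and \ref{t4} are genuinely modulo-$2$ phenomena --- they rely on $q f_8^3\equiv \Delta(z)\pmod 2$ and on $\tau(3)=252$ being even --- and so cannot reach beyond modulo $8$; the route above deliberately trades the Hecke-eigenform shortcut for an explicit $2$-adic expansion of $f_2^3/f_1^6$ together with its $3$-adic dissections.
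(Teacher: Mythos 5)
This statement is one of the open conjectures in Section \ref{sec:concl}; the paper offers no proof of it, so the only question is whether your argument actually establishes it, and it does not. What you give is a reduction to two unproven ingredients, and the pivotal one --- your Key Claim --- is false. Write $H(q)=\sum_{m\geq 0}\bigl([q^{6m}]\,f_2^3/f_1^6\bigr)q^m$ as in your setup (your exact identity $\sum_{m\geq 0}\overline{R_6^\ast}(18m+2)q^m=4f_1^3H(q)$ is correct). Using $1/f_1^6=1+6q+27q^2+98q^3+315q^4+918q^5+2492q^6+\cdots+81816q^{10}+\cdots+380051q^{12}+\cdots$ and $f_2^3=1-3q^2+5q^6-7q^{12}+\cdots$, one finds
\begin{equation*}
[q^6]\frac{f_2^3}{f_1^6}=2492-3\cdot 315+5=1552\equiv 16\pmod{32},\qquad
[q^{12}]\frac{f_2^3}{f_1^6}=380051-3\cdot 81816+5\cdot 2492-7=147056\equiv 16\pmod{32}.
\end{equation*}
So the coefficients of $q^1$ and $q^2$ in $H$ are $\equiv 16\pmod{32}$: modulo $32$, $H$ is \emph{not} supported on exponents divisible by $3$. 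This is precisely the ``single surviving cross term'' you identified as fatal. Since the coefficients of $f_1^3$ are odd, each such term contributes $\equiv 16\pmod{32}$ to $[q^{3n+2}](f_1^3H)$, so the conjectured divisibility by $128$ --- if true --- must arise from cancellation among several such contributions, which a term-by-term support argument cannot see.

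The second ingredient, the internal congruence $\overline{R_6^\ast}(162n+20)\equiv u\,\overline{R_6^\ast}(18n+2)\pmod{128}$ with $u$ a unit, is only asserted, not proved; your reduction of the $k\geq 1$ family to the $k=0$ case via $m\mapsto 9m+1$ (using $\tfrac{17\cdot 3^{2k+2}-1}{8}=9\cdot\tfrac{17\cdot 3^{2k}-1}{8}+1$) is structurally sound, but it has nothing to stand on until both ingredients are supplied. Any salvage along these lines would have to begin from a weaker structural statement, e.g. $H\equiv H_0(q^3)+16\,G(q)\pmod{32}$ for explicit $H_0,G$, and then prove that $[q^{3n+2}](f_1^3G)$ is always even --- a genuinely new step, not the ``finite verification'' your plan anticipated. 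As it stands, the conjecture remains unproved.
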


\end{enumerate}

\subsection*{Acknowledgements}

The authors are grateful to the anonymous referees for their comments.

\subsection*{Declarations} The authors make the following declaration.
\begin{enumerate}
    \item The authors declare no conflicts of interest.
    \item The authors have no relevant financial or non-financial interests to disclose.
    \item All authors contributed to the study conception and design. All authors read and approved the final manuscript.
\end{enumerate}

\end{document}